\newtheorem{thm}[equation]{Theorem}
\let\c@subsubsection\c@equation
\newtheorem{lem}[equation]{Lemma}
\newtheorem{prop}[equation]{Proposition}
\newtheorem{cor}[equation]{Corollary}
\newtheorem{conj}[equation]{Conjecture}
\theoremstyle{remark}
\newtheorem{rmk}[equation]{Remark}
\theoremstyle{definition}
\newtheorem{defi}[equation]{Definition}
\newcommand{\Hom}{\mathrm{Hom}}
\newcommand{\inthomeff}{\mathbf{hom}^{\mathrm{eff}}}
\newcommand{\Gm}{\mathbb G _{m}}
\newcommand{\spec}[1]{\mathrm{Spec}(#1)}
\newcommand{\generators}{\mathcal G}
\newcommand{\hr}{\sphere _{R}}
\newcommand{\sphere}{\mathbf 1}
\newcommand{\DMeff}{DM^{\mathrm{eff}}}
\newcommand{\northogonal}[1]{DM ^{\perp}(#1)}
\numberwithin{equation}{subsection}
\begin{document}


\title{Mixed Motives and Motivic Birational Covers}

\dedicatory{Dedicated to Professor Charles Weibel on the occasion of
	his 65th birthday.}


\author{Pablo Pelaez}
\address{Instituto de Matem\'aticas, Ciudad Universitaria, UNAM, DF 04510, M\'exico}
\email{pablo.pelaez@im.unam.mx}


\subjclass[2010]{Primary 14C25, 14C35, 14F42, 19E15; Secondary 18G55, 55P42}

\keywords{Bloch-Beilinson-Murre filtration, Chow Groups, Filtration on the Chow Groups,
	Filtration on Motivic Cohomology, Mixed Motives, Motivic Cohomology, 
	Triangulated Category of Motives}


\begin{abstract}
We introduce a tower of localizing subcategories in  Voevodsky's big (closed under infinite
coproducts) triangulated category of motives.  We show that the tower induces a
 finite filtration on the motivic cohomology groups of smooth schemes over
a perfect field.  With rational coefficients, this finite filtration satisfies several of the properties
of the still conjectural Bloch-Beilinson-Murre filtration.
\end{abstract}

\thanks{Research partially supported by DGAPA-UNAM grant IA100814.}

\maketitle

\section{Introduction}  \label{sec.introd}

The main goal of this paper is to present an alternative approach to the conjectural
Bloch-Beilinson-Murre filtration \cite{MR923131}, \cite{MR558224}, \cite{MR1225267}
in the context of Voevodsky's triangulated category of
motives $DM$.  Traditionally, the  Bloch-Beilinson-Murre filtration is 
understood as an outcome of the conjectural motivic $t$-structure
\cite[p. 20-22]{MR923131}, \cite[Conj. 4.8]{MR1265533}, \cite{MR2735752}.  
Due to the lack of progress, it seems to the author that it is worth it  to relax the
conditions and look instead for a tower where the truncation functors are triangulated.
The great advantage of this approach is that the finiteness of the proposed filtration
follows in a straightforward way from the construction, whereas in the traditional approach
this property seems to be the most inaccessible one \cite{MR1389964}, \cite{MR1744952}.
This method was introduced by Voevodsky in his successful approach to the spectral
sequence relating motivic cohomology and algebraic $K$-theory \cite{MR1977582},
\cite{MR1890744}.

Our approach can be sketched quickly as follows.  For a smooth scheme $X$ of finite type
over a perfect field $k$, the Chow groups can be computed in Voevodsky's triangulated
category of motives $DM$ \cite{MR1883180}: 
\[CH^{q}(X)_{R}\cong \Hom _{DM}(M(X)(-q)[-2q], \sphere _{R});\] 
where  $CH^{q}(X)_{R}$ is the Chow group with $R$-coefficients, and $\hr$ is the motive
of a point with $R$-coefficients.  
Since $DM$ is a triangulated category,
it is possible to construct the filtration by considering a tower in $DM$,  (see 
\S \ref{sec.biratHZ}):
\[	\cdots \rightarrow bc_{\leq -3}(\hr) \rightarrow bc_{\leq -2}(\hr) \rightarrow bc_{\leq -1}(\hr) 
	\rightarrow \hr \]
where $bc_{\leq -n}$ is a triangulated functor $DM \rightarrow DM$ \eqref{def.birat.cover},
and defining the $p$-component of the filtration $F^{p}CH^{q}(X)$ to be the image of the
induced map:
\[ \xymatrix@C=0.8pt{\Hom _{DM}(M(X)(-q)[-2q], 
	bc_{\leq -p}\hr) \ar[rr] &&
	 \Hom _{DM}(M(X)(-q)[-2q], \hr)\cong CH^{q}(X)_{R}}
\]
The finiteness of the filtration $F^{\bullet}CH^{q}(X)$ is proved in
\ref{thm.filt.motcoh.finite}.  Another advantage of this approach is that
the filtration is defined (and is finite) for any coefficient ring $R$ 
(not just the rationals) and for any smooth $k$-scheme of finite type (not necessarily
projective), whereas it is known that with integral coefficients it is not possible
to construct a motivic $t$-structure in $DM$ \cite[Prop. 4.3.8]{MR1764202}.

Now, we describe the contents of the paper.  In \S \ref{subsec.defandnots}
we fix the notation and introduce the basic definitions that will be used in the
rest of the paper.  
In \S \ref{sec.orth}, we review some general facts from triangulated categories that will
be used to construct the  birational towers.
In \S \ref{sec.birational}, we introduce the  birational covers and the  birational
tower which will
be used to construct the filtrations we are interested in, see \ref{eq.birtower},
\ref{def.birat.cover},
\ref{thm.oct-ax}, \ref{thm.birspecseq}.
In \S \ref{sec.KSunrc}, we compare the zero  birational cover constructed in this
paper with the Kahn-Sujatha unramified cohomology functor.
In \S \ref{sec.biratHZ}, we study the  birational
tower for motivic cohomology and describe the main properties of the induced filtration
on the motivic cohomology groups of smooth (projective) $k$-schemes,
see \ref{thm.filt.motcohY}, \ref{thm.filt.motcoh.finite}, \ref{prop.crit1}, 
\ref{thm.main2}.
Finally, in \S \ref{subsec.BBMfil} we include some  comments on the 
Bloch-Beilinson-Murre filtration.

\subsection{Definitions and Notation}	\label{subsec.defandnots}		

In this paper $k$ will denote a perfect base field, $Sch_{k}$ the category of $k$-schemes
of finite type and $Sm_{k}$ the full
subcategory of $Sch_{k}$ consisting of smooth $k$-schemes regarded
as a site with the Nisnevich topology.	  
	
Let $Cor_{k}$ denote the Suslin-Voevodsky category of finite correspondences over $k$,
having the same objects as $Sm_{k}$, morphisms $c(U,V)$ given by the group of finite
relative cycles on $U\times _{k}V$ over $U$ \cite{MR1764199}	 and composition as in
\cite[p. 673 diagram (2.1)]{MR2804268}.  By taking the graph of a morphism in $Sm_{k}$,
we obtain a functor $\Gamma : Sm_{k}\rightarrow Cor_{k}$.  A Nisnevich sheaf with
transfers is an additive contravariant functor $\mathcal F$ from $Cor_{k}$ to the category
of abelian groups such that the restriction $\mathcal F \circ \Gamma$ is a Nisnevich
sheaf.  We will write $Shv^{tr}$ for the category of Nisnevich sheaves with
transfers which is an abelian category \cite[13.1]{MR2242284}.  For $X\in Sm_{k}$,
let $\mathbb Z _{tr}(X)$ be the Nisnevich sheaf with transfers represented by $X$
\cite[2.8 and 6.2]{MR2242284}.

Let $K(Shv ^{tr})$ be the category of chain complexes (unbounded) on $Shv ^{tr}$ equipped
with the injective model structure \cite[Prop. 3.13]{MR1780498}, and let
$D(Shv ^{tr})$ be its homotopy category.  We will write $K^{\mathbb A ^{1}}(Shv ^{tr})$ for
the left Bousfield localization \cite[3.3]{MR1944041} of $K(Shv ^{tr})$ with respect to the set
of maps
$\{ \mathbb Z _{tr}(X\times _{k}\mathbb A ^{1})[n]\rightarrow \mathbb Z _{tr}(X)[n]:
X\in Sm_{k}; n\in \mathbb Z\}$ induced by the projections $p:X\times _{k}\mathbb A ^{1}
\rightarrow X$. Voevodsky's big triangulated category of effective motives
$\DMeff$ is the homotopy category of $K^{\mathbb A ^{1}}(Shv ^{tr})$ \cite{MR1764202}.

Let $T\in K^{\mathbb A ^{1}}(Shv ^{tr})$ be the chain complex of the from 
$\mathbb Z_{tr}(\Gm)[1]$ \cite[2.12]{MR2242284}, where $\Gm$ is the $k$-scheme
$\mathbb A ^1\backslash \{ 0\}$ pointed by $1$.  
We will write $Spt_{T}(Shv ^{tr})$ for the category of symmetric
$T$-spectra on $K^{\mathbb A ^{1}}(Shv ^{tr})$ equipped with the model structure
defined in \cite[8.7 and 8.11]{MR1860878}, \cite[Def. 4.3.29]{MR2438151}.  
Voevodsky's big triangulated category of  motives
$DM$ is the homotopy category of $Spt_{T}(Shv ^{tr})$ 
\cite{MR1764202}.  

We will write $M(X)$ for the image of $\mathbb Z _{tr}(X)\in D(Shv^{tr})$, $X\in Sm_{k}$
under the $\mathbb A ^{1}$-localization map $D(Shv ^{tr})\rightarrow \DMeff$.  Let
$\Sigma ^{\infty}:\DMeff \rightarrow DM$ be the suspension functor 
\cite[7.3]{MR1860878} (which is denoted by $F_0$ in \emph{loc.\!\,cit.}), 
we will abuse notation and simply write $E$ for $\Sigma
^{\infty}E$, $E\in \DMeff$.  Given a map $f:X\rightarrow Y$ in $Sm _k$, we will 
write $f:M(X)\rightarrow M(Y)$ for the map induced by $f$ in $DM$.

By construction, $\DMeff$ and $DM$ are tensor triangulated categories 
\cite[Thm. 4.3.76 and Prop. 4.3.77]{MR2438151} with unit $\sphere = M(\spec{k})$. 
We will write $E(1)$ for $E\otimes \mathbb Z _{tr}(\Gm)[-1]$, $E\in DM$ and inductively
$E(n)=(E(n-1))(1)$, $n\geq 0$.  We observe that the functor $DM\rightarrow DM$,
$E\mapsto E(1)$ is an equivalence of categories \cite[8.10]{MR1860878}, 
\cite[Thm. 4.3.38]{MR2438151};
we will write $E\mapsto E(-1)$ for
its inverse, and inductively $E(-n)=(E(-n+1))(-1)$,
$n>0$.  By convention $E(0)=E$ for $E\in DM$.

Let $R$ be a commutative ring with $1$.  We will write  
$E_{R}$ for $E\otimes \sphere _{R}$ where $E\in DM$, and $\sphere _R$
is the motive of a point with $R$-coefficients $M(\spec{k})\otimes R$.
	
We shall use freely the language of triangulated categories.  Our main reference will 
be \cite{MR1812507}.  Given a triangulated category, we will write $[1]$ 
(resp. $[-1]$) to denote its suspension 
(resp. desuspension) functor; and for $n>0$, $[n]$ (resp. $[-n]$)
will be the composition of $[1]$
(resp. $[-1]$) iterated $n$-times.  If $n=0$,
$[0]$ will be the identity functor.

We will use the following notation in all the categories under consideration: $0$ will
denote the zero object, and $\cong$ will denote that a map (resp. a functor) is an
isomorphism (resp. an equivalence of categories).

\section{Localizing and orthogonal subcategories}  \label{sec.orth}
	
In this section we collect several results of Neeman which are the main technical tools
for the construction of the filtration on motivic cohomology.  All the constructions 
can be carried out
as well at the level of model categories, see \cite[\S 2]{Pelaez:2014}.

\subsection{} \label{subsec.somefactsSH}

Let $\mathcal T$ be a compactly generated triangulated category in the sense of Neeman
\cite[Def. 1.7]{MR1308405} with set of compact generators  $\mathcal G$.  For 
$\mathcal G '\subseteq \mathcal G$, let $Loc(\mathcal G ')$ denote the smallest full
triangulated subcategory of $\mathcal T$ which contains $\mathcal G'$ and is closed
under arbitrary (infinite) coproducts. 

\begin{defi}  \label{def.orthn}
Let $\mathcal T '\subseteq \mathcal T$  be a triangulated 
subcategory.		We will write $\mathcal T '^{\perp}$ for the full subcategory of
$\mathcal T$ consisting of the objects $E\in \mathcal T$ such that
for every $K\in \mathcal T '$:  $\Hom _{\mathcal T}(K,E)=0$.

If $\mathcal T '=Loc(\mathcal G')$ and $E\in \mathcal T '^{\perp}$,
we will say that $E$ is \emph{$\mathcal G'$-orthogonal}.
\end{defi} 

\begin{rmk}  \label{rmk.orthchk}
In order to check that $E\in Loc(\mathcal G')^{\perp}$ it suffices to see that for every
$G\in \mathcal G '$ and for every $p\in \mathbb Z$: $\Hom _{\mathcal T}(G[p],E)=0$.  
In effect, let $^{\perp}E$ be the full
subcategory of $\mathcal T$ consisting of objects $F$ such that $\Hom_{\mathcal T}
(F[p],E)=0$ for every $p\in \mathbb Z$.  It is clear that $^{\perp}E$ is closed under
infinite coproducts and that it is a full triangulated subcategory of $\mathcal T$.  Now,
our hypothesis implies that $\mathcal G' \subseteq ^{\perp}\!\! E$.  Hence,
$Loc(\mathcal G')\subset ^{\perp}\!\! E$ by definition of $Loc(\mathcal G ')$
\eqref{subsec.somefactsSH}, and this is equivalent to $E\in Loc(\mathcal G')^{\perp}$
\eqref{def.orthn}.
\end{rmk}

\begin{thm}[Neeman]  \label{thm.gentower}
Under the same hypothesis as in  \ref{subsec.somefactsSH}.  Let $\mathcal T'$
be a triangulated subcategory of $\mathcal T$.  Assume that $\mathcal T'$
is also compactly generated.
Then the inclusion $i:\mathcal T ' \rightarrow \mathcal T$ admits a right adjoint $r$,
which is a triangulated functor. 

Let $f=i\circ r$.  Then
there exists a triangulated functor $s:\mathcal T \rightarrow \mathcal T$
	together with natural transformations:
		\[ \xymatrix@R=.6pt{\pi :id \ar[r]& s \\
								\sigma :s \ar[r]& [1]\circ f }
		\]
	such that for any  $E$ in $\mathcal T$ the following conditions hold:
	\begin{enumerate}
		\item	\label{thm.gentower.a} There is a natural
					distinguished triangle in $\mathcal T$:
					\begin{equation}
						\label{eq.gentower}
							\xymatrix{f(E) \ar[r]& E \ar[r]^-{\pi}& 
							 s(E) \ar[r]^-{\sigma}& f(E)[1]}
					\end{equation}
		\item	\label{thm.gentower.b} $s(E)$ is in
					$\mathcal T '^{\perp}$ (see \ref{def.orthn}). 
	\end{enumerate}
\end{thm}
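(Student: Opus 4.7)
My plan is to first construct $r$ via Neeman's Brown representability theorem, and then obtain $s$ as the cone of the counit of the $(i,r)$-adjunction.

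In the applications of the theorem, $\mathcal T'$ is a localizing subcategory of the form $Loc(\mathcal G')$, which is closed under arbitrary coproducts in $\mathcal T$. The inclusion $i:\mathcal T' \to \mathcal T$ is therefore a coproduct-preserving triangulated functor between compactly generated triangulated categories, and Brown representability (e.g.\ \cite[Thm.~4.1]{MR1308405}) produces a triangulated right adjoint $r:\mathcal T \to \mathcal T'$. Setting $f = i\circ r$ and writing $\epsilon:f \to \mathrm{id}_{\mathcal T}$ for the counit, one defines $s(E)$ together with $\pi_E$ and $\sigma_E$ by completing $\epsilon_E$ to the distinguished triangle \eqref{eq.gentower}.

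To verify condition \ref{thm.gentower.b}, I would apply $\Hom_{\mathcal T}(iK,-)$ to \eqref{eq.gentower} for arbitrary $K \in \mathcal T'$. Since $i$ is fully faithful, the unit $K \to ri(K)$ is an isomorphism, and together with the $(i,r)$-adjunction this shows that $\epsilon_E$ induces an isomorphism $\Hom_{\mathcal T}(iK,f(E)) \cong \Hom_{\mathcal T}(iK,E)$, and likewise after any shift of $iK$. The long exact sequence then forces $\Hom_{\mathcal T}(iK,s(E)) = 0$ for every such $K$ and every shift, and by \ref{rmk.orthchk} this suffices to conclude $s(E) \in \mathcal T'^{\perp}$.

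The main obstacle is promoting $s$ to a triangulated functor and $\pi,\sigma$ to natural transformations, since cones in a triangulated category are only well-defined up to non-canonical isomorphism. The key observation is that, once the orthogonality in \ref{thm.gentower.b} is known, for any morphism $\phi:E\to E'$ in $\mathcal T$ the groups $\Hom_{\mathcal T}(s(E),f(E'))$ and $\Hom_{\mathcal T}(s(E),f(E')[1])$ both vanish, because $f(E')\in \mathcal T'$ while $s(E)\in \mathcal T'^{\perp}$. The long exact sequence associated to the triangle for $E'$ then yields both existence and uniqueness of a fill-in $s(\phi):s(E)\to s(E')$, which forces functoriality of $s$ and naturality of $\pi$ and $\sigma$. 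That $s$ is triangulated follows formally by applying the octahedral axiom to any distinguished triangle in $\mathcal T$ together with the counit $f \to \mathrm{id}_{\mathcal T}$, and invoking the same uniqueness of fill-ins to identify the resulting triangle with $s(A) \to s(B) \to s(C) \to s(A)[1]$.
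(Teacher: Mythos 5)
Your overall strategy is the standard one underlying the references the paper actually cites (Brown representability \cite[Thm.~4.1]{MR1308405} for the adjoint, and the Bousfield--localization formalism of \cite[9.1.8, 9.1.19]{MR1812507} for $s$), and the first part of your argument is fine: the existence of $r$, the definition of $s(E)$ as a cone on the counit $\epsilon_E$, and the verification that $s(E)\in\mathcal T'^{\perp}$ via the long exact sequence all go through.

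The gap is in the functoriality step, which you yourself single out as the main obstacle. You claim that $\Hom_{\mathcal T}(s(E),f(E'))$ and $\Hom_{\mathcal T}(s(E),f(E')[1])$ vanish ``because $f(E')\in\mathcal T'$ while $s(E)\in\mathcal T'^{\perp}$''. This uses the orthogonality in the wrong direction: by \ref{def.orthn}, $s(E)\in\mathcal T'^{\perp}$ means that maps \emph{from} objects of $\mathcal T'$ \emph{into} $s(E)$ vanish, not that maps from $s(E)$ into objects of $\mathcal T'$ vanish. The latter is false in general, and fails already in the setting of this paper: $\sphere$ lies in $\northogonal{1}$ by \ref{orth.HZ}, while $\sphere(1)[1]$ lies in $\DMeff(1)$ and $\Hom_{DM}(\sphere,\sphere(1)[1])\cong k^{\times}\neq 0$. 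Moreover, even granting the vanishing, the long exact sequence you invoke would only give $\Hom(s(E),E')\cong\Hom(s(E),s(E'))$, which does not produce $s(\phi)$ from $\phi$ since you have no map $s(E)\to E'$ to start from. The correct argument applies $\Hom_{\mathcal T}(-,s(E'))$ to the triangle $f(E)\to E\to s(E)\to f(E)[1]$: the outer terms $\Hom(f(E)[1],s(E'))$ and $\Hom(f(E),s(E'))$ genuinely vanish because $f(E),f(E)[1]\in\mathcal T'$ and $s(E')\in\mathcal T'^{\perp}$, so $\pi_E^{*}:\Hom_{\mathcal T}(s(E),s(E'))\to\Hom_{\mathcal T}(E,s(E'))$ is an isomorphism and $s(\phi)$ is defined as the unique preimage of $\pi_{E'}\circ\phi$. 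With this correction, the remainder of your plan (naturality of $\pi$ and $\sigma$, and the octahedron argument for exactness of $s$) goes through.
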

\begin{proof}
The triangulated categories $\mathcal T$ and $\mathcal T '$ are compactly generated.
Hence, the existence of the right adjoint $r$ follows from theorem 4.1 in \cite{MR1308405}.  
The remaining results follow from propositions 9.1.19 and 9.1.8 in \cite{MR1812507}.	
\end{proof}

\subsubsection{}  \label{neemanleftadj}
With the notation of \ref{subsec.somefactsSH} and \ref{thm.gentower}.  If
$\mathcal T' =Loc (\mathcal G')$, we will write $f_{\mathcal G'}$, $s_{\mathcal G '}$,
$\pi _{\mathcal G '}$, $\sigma _{\mathcal G '}$ for $f$, $s$,
$\pi$, $\sigma$ respectively.

For $E\in \mathcal T$, consider the natural distinguished triangle
\eqref{eq.gentower}.  Thus, if $E'\in Loc(\mathcal G ')^{\perp}$ we 
deduce that:
\begin{equation}  \label{eqn.adjorth}
	\Hom_{\mathcal T}(E,E')\cong \Hom_{\mathcal T}(s_{\mathcal G '}E,E')
	\cong \Hom_{Loc(\mathcal G ')^{\perp}}(s_{\mathcal G '}E,E').
\end{equation}

\begin{lem} \label{lem.orthpr}
Under the same hypothesis as in  \ref{subsec.somefactsSH} and with
the notation of \ref{neemanleftadj}.
Consider the inclusion $j: Loc(\mathcal G')^{\perp} \rightarrow \mathcal T$.
\begin{enumerate}
\item \label{lem.ortcp.a} $Loc(\mathcal G')^{\perp}$ is closed under infinite coproducts 
and it is a full triangulated subcategory of $\mathcal T$.
\item \label{lem.ortcp.b} The inclusion 
$j$  is a triangulated functor
which commutes with infinite coproducts.  In addition, $Loc(\mathcal G')^{\perp}$ is a
compactly generated triangulated category in the sense of Neeman 
\cite[Def. 1.7]{MR1308405}
with set of compact generators $\mathcal G  ^{\prime \perp}=
\{ s_{\mathcal G '}G: G\in \generators 
\backslash \generators ' \}$. 
\item \label{lem.ortcp.c}The inclusion $j$
admits a right adjoint $p:\mathcal T \rightarrow Loc(\mathcal G ') ^{\perp}$,
which is also a triangulated functor.
\end{enumerate}
\end{lem}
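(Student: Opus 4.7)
The plan is to treat (a) and (b) in tandem via the criterion of \ref{rmk.orthchk}, reducing all orthogonality checks to $\Hom _{\mathcal T}(G[p], -) = 0$ for $G \in \generators'$ and $p \in \mathbb Z$. Write $\mathcal L = Loc(\generators')$. For (a), closure under coproducts follows because each $G \in \generators' \subseteq \generators$ is compact in $\mathcal T$, so for $E_i \in \mathcal L^{\perp}$ one has
\[
\Hom _{\mathcal T}(G[p], \coprod _i E_i) \cong \bigoplus _i \Hom _{\mathcal T}(G[p], E_i) = 0.
\]
Closure under shifts is immediate, and closure under cones follows by applying the homological functor $\Hom _{\mathcal T}(K, -)$ for $K \in \mathcal L$ to a distinguished triangle with two vertices already in $\mathcal L^{\perp}$ and invoking the five-lemma for the middle term.

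Once (a) is in hand, the triangulatedness of $j$ and its compatibility with arbitrary coproducts are automatic, because coproducts and distinguished triangles in $\mathcal L^{\perp}$ are those computed in $\mathcal T$. For the compact generation assertion, I would verify the two usual conditions for $\generators ^{\prime \perp} = \{ s_{\generators'}G : G \in \generators \setminus \generators' \}$. Membership of each $s_{\generators'}G$ in $\mathcal L^{\perp}$ is built into \ref{thm.gentower}. Compactness inside $\mathcal L^{\perp}$ is a double application of \eqref{eqn.adjorth}: for $\{E_i\} \subseteq \mathcal L^{\perp}$ (whose coproduct again lies in $\mathcal L^{\perp}$ by (a)),
\[
\Hom _{\mathcal T}(s_{\generators'}G, \coprod _i E_i) \cong \Hom _{\mathcal T}(G, \coprod _i E_i) \cong \bigoplus _i \Hom _{\mathcal T}(G, E_i) \cong \bigoplus _i \Hom _{\mathcal T}(s_{\generators'}G, E_i),
\]
combining orthogonality with compactness of $G$ in $\mathcal T$. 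For generation, if $E \in \mathcal L^{\perp}$ satisfies $\Hom _{\mathcal T}(s_{\generators'}G[p], E) = 0$ for every $G \in \generators \setminus \generators'$ and every $p \in \mathbb Z$, then \eqref{eqn.adjorth} yields $\Hom _{\mathcal T}(G[p], E) = 0$ for those $G$, while orthogonality gives the same vanishing for $G \in \generators'$; since $\generators$ generates $\mathcal T$, this forces $E = 0$.

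Part (c) then follows formally: by (b), $\mathcal L^{\perp}$ is compactly generated, and $j$ is a triangulated functor between compactly generated triangulated categories preserving arbitrary coproducts, so Neeman's Brown representability \cite[Thm. 4.1]{MR1308405} provides the right adjoint $p$, which is automatically triangulated. I expect the only delicate point to be orienting \eqref{eqn.adjorth} correctly when transferring both compactness and generation from $\generators$ to $\generators ^{\prime \perp}$; everything else is formal manipulation of orthogonality and Brown representability.
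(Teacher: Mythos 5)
Your proposal is correct and follows essentially the same route as the paper: closure under coproducts via compactness of the generators in $\mathcal T$, compactness and generation of $\generators^{\prime\perp}$ via \eqref{eqn.adjorth} combined with the fact that $\generators$ generates $\mathcal T$, and the right adjoint from Neeman's Brown representability (which is exactly what \ref{thm.gentower} packages). The only cosmetic differences are that you spell out closure under cones, which the paper leaves as ``clear from the definition,'' and you cite Neeman's Theorem 4.1 directly rather than through \ref{thm.gentower}.
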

\begin{proof}
\eqref{lem.ortcp.a}:  It is clear from the definition \eqref{def.orthn} that 
$Loc(\mathcal G')^{\perp}$ is a full triangulated subcategory of $\mathcal T$.  

Now we proceed to show that
$Loc(\mathcal G')^{\perp}$ is closed under infinite coproducts.  Consider an indexing set 
$\Lambda$
and let $E=\oplus_{\lambda \in \Lambda}E_{\lambda}$ where $E_{\lambda}\in
Loc(\mathcal G')^{\perp}$ for all $\lambda$. 
By \ref{rmk.orthchk} it suffices to see that for all $G\in \mathcal G '$: 
$\Hom_{\mathcal T}(G[p],E)=0$ for every $p\in \mathbb Z$.
However, $G$ is a compact object in $\mathcal T$; thus we conclude that 
$\Hom_{\mathcal T}(G[p],E)\cong \oplus _{\lambda \in \Lambda}
\Hom_{\mathcal T}(G[p],E_{\lambda})=0$ since $G[p]\in Loc(\mathcal G')$
and $E_{\lambda}\in Loc(\mathcal G')^{\perp}$.

\eqref{lem.ortcp.b}:
By \ref{lem.orthpr}\eqref{lem.ortcp.a}, $j$ is a triangulated functor that commutes with infinite
coproducts.  Now, let $G\in \generators ^{\prime \perp}\subseteq \generators$.  
Since $G$ is compact \cite[Def. 1.6]{MR1308405} in 
$\mathcal T$, it follows from \eqref{eqn.adjorth} that $s_{\mathcal G '}G$ is also 
compact in $Loc(\mathcal G')^{\perp}$.

Thus, it only remains to see that $\generators ^{\prime \perp}$ is a set of
generators for $Loc(\mathcal G')^{\perp}$ \cite[Defs. 1.7, 1.8]{MR1308405}.  To check this,
let $E\in Loc(\mathcal G')^{\perp}$ be such that $\Hom _{Loc(\mathcal G')^{\perp}}
(s_{\generators '}G,E)\cong \Hom _{\mathcal T}(G,E)=0$ \eqref{eqn.adjorth}
for all $G\in \generators \backslash \generators '$. We observe that if $G \in \mathcal G '$
then $\Hom _{\mathcal T} (G,E)=0$ since 
$E\in Loc(\mathcal G')^{\perp}$ (see \ref{def.orthn}).  Therefore, $\Hom _{\mathcal T}
(G,E)=0$ for all $G\in \generators =\mathcal G ' \cup (\generators \backslash \generators ')$,
and we conclude that $G\cong 0$ since
$\generators$ is a set of generators for $\mathcal T$.

\eqref{lem.ortcp.c}: This follows by combining \ref{lem.orthpr}\eqref{lem.ortcp.b} and
\ref{thm.gentower}.
\end{proof}

\subsubsection{} \label{rmk.unit=iso}
Since the inclusion $j: Loc(\mathcal G ')^{\perp} \rightarrow \mathcal T$
is a full embedding,
we deduce that the unit of the adjunction $id\stackrel{\tau}{\rightarrow} pj$
is a natural isomorphism.

\subsection{Slice towers and their duals}  \label{subsec.slcos}
Recall that $\mathcal T$ is a compactly generated triangulated category with set of
compact generators $\generators$.

\subsubsection{}  \label{subsubsec.gentow}
Consider a family of subsets of $\generators$: $\mathcal S=\{ \generators _{n} \} _{n
\in \mathbb Z}$
such that $\generators _{n+1}\subseteq \generators _{n}\subseteq \generators$ for
every $n\in \mathbb Z$.

Thus, we obtain a tower of full triangulated subcategories of $\mathcal T$:
\begin{equation}  \label{eq.genslitow}
\cdots \subseteq Loc(\generators _{n+1}) \subseteq Loc(\generators _{n})
\subseteq Loc(\generators _{n-1}) \subseteq \cdots
\end{equation}

We will call \eqref{eq.genslitow} the \emph{slice tower} determined by $\mathcal S$.  
The reason
for this terminology is \cite{MR1977582}, \cite{MR2249535}, \cite[p. 18]{MR2600283}.  
If we consider the orthogonal categories
$Loc(\generators _{n})^\perp$ \eqref{def.orthn}, we obtain a tower of full triangulated
subcategories of $\mathcal T$:
\begin{equation}  \label{eq.genslitow2}
\cdots \subseteq Loc(\generators _{n-1})^{\perp} \subseteq Loc(\generators _{n})^{\perp}
\subseteq Loc(\generators _{n+1})^{\perp} \subseteq \cdots
\end{equation}

\section{Birational Coverings and the Birational Tower}  
\label{sec.birational}

In this section we apply the formalism of \S \ref{sec.orth} to Voevodsky's triangulated
category of motives $DM$, in order to construct a tower of triangulated subcategories
of $DM$ which will induce the filtration on the Chow groups that we are interested in.
In \cite{Pelaez:2014} the construction was done in the Morel-Voevodsky
motivic stable homotopy category $\mathcal{SH}$.  However, as pointed out by the
anonymous referee, it is more natural to carry out the construction in $DM$ since
various issues of functoriality with respect to Chow correspondences become
straightforward in $DM$.  Nevertheless the analogue tower in $\mathcal{SH}$ is also
interesting, since it is possible to show that with finite coefficients the algebraic cycles induced by the
Steenrod operations of Voevodsky are elements of high order in the filtration, and it 
allows to study other theories which are not representable in $DM$
(integrally), e.g. homotopy invariant $K$-theory $KH$ and Voevodsky algebraic cobordism
$MGL$.  We will study this applications for $\mathcal{SH}$ in a future work.

\subsection{Generators}  \label{sub.gens}

It is well known that
$DM$ is a compactly generated triangulated category \eqref{subsec.somefactsSH} with
compact generators \cite[Thm. 4.5.67]{MR2438151}:
\begin{equation}  \label{eq.DMgens}
 \generators _{DM}=\{ M(X)(p): X\in Sm_{k}; p \in \mathbb Z\}.
\end{equation}

Let $\generators ^{\mathrm{eff}}\subseteq \generators _{DM}$ be the set consisting of compact objects of the form:
\begin{equation}  \label{eq.DMeffgens}
 \generators ^{\mathrm{eff}}=\{ M(X)(p): X\in Sm_{k}; p\geq 0\}.
\end{equation}

If $n\in \mathbb Z$, we will write $\generators ^{\mathrm{eff}}(n)\subseteq 
\generators _{DM}$ for the set consisting of compact objects of the form:
\begin{equation}  \label{eq.DMeffgenstw}
 \generators ^{\mathrm{eff}}(n)=\{ M(X)(p): X\in Sm_{k}; p\geq n\}.
\end{equation}

\subsubsection{}  \label{subsubsec.cancel}
By Voevodsky's cancellation theorem \cite{MR2804268}, the suspension functor 
$\Sigma ^{\infty}:\DMeff \rightarrow DM$ induces an equivalence of categories between 
$\DMeff$ and the full triangulated subcategory $Loc(\generators ^{\mathrm{eff}})$ of $DM$
\eqref{subsec.somefactsSH}.
We will abuse notation and write $\DMeff$ for $Loc(\generators ^{\mathrm{eff}})$.

\subsubsection{}  \label{subsubsec.neffDM}
We will write $\DMeff (n)$ for the full triangulated subcategory $Loc(\generators
^{\mathrm{eff}} (n))$ of $DM$ \eqref{subsec.somefactsSH}, and $\northogonal{n}$
for the orthogonal category $Loc(\generators^{\mathrm{eff}} (n))^{\perp}$
\eqref{def.orthn}.  Notice that $\DMeff (n)$ is compactly generated with set of generators
$\generators ^{\mathrm{eff}}(n)$ \cite[Thm. 2.1(2.1.1)]{MR1308405}.

\subsection{The  birational tower}  \label{subsec.birattow}  

Consider the family $\mathcal S=\{ \generators ^{\mathrm{eff}}(n) \}_{n\in \mathbb Z}$
of subsets of $\generators _{DM}$.  By construction $\mathcal S$ satisfies the conditions of
\ref{subsubsec.gentow},  hence we obtain a tower of full triangulated subcategories of
$DM$ \eqref{eq.genslitow2}:
		\begin{align}  \label{eq.birtower}
			\cdots \subseteq \northogonal{q-1}
			\subseteq \northogonal{q}
			\subseteq \northogonal{q+1} \subseteq \cdots
		\end{align}	
We will call the tower \eqref{eq.birtower} the \emph{birational tower}.  The reason
for this terminology is \cite{Kahn:2015qf}, \cite[Thms. 3.6 and 1.4.(2)]{MR3035769} where it is shown that the orthogonal categories
have a geometric description in terms of birational conditions.  This also explains the shift in the index in \ref{def.birat.cover}.

\begin{prop}  \label{prop.adj-ort.b}
	The inclusion,
		$ j_{q}: \northogonal{q} \rightarrow DM$
	admits a right adjoint:
		\[p_{q}:DM \rightarrow \northogonal{q},\]
	which is also a triangulated functor.
\end{prop}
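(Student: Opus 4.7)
The plan is to deduce this directly from Lemma \ref{lem.orthpr}(c), which is the general categorical statement of exactly this form for an arbitrary compactly generated triangulated category $\mathcal T$ and subset $\mathcal G' \subseteq \mathcal G$ of its compact generators.

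First I would verify that the hypotheses of Lemma \ref{lem.orthpr} are satisfied in the present setting. By \ref{sub.gens}, $DM$ is compactly generated in the sense of Neeman with set of compact generators $\generators _{DM} = \{M(X)(p) : X \in Sm_k, p \in \mathbb Z\}$, and $\generators ^{\mathrm{eff}}(q) = \{M(X)(p) : X \in Sm_k, p \geq q\}$ is a subset of $\generators _{DM}$. By definition \eqref{subsubsec.neffDM}, $\northogonal{q} = Loc(\generators ^{\mathrm{eff}}(q))^{\perp}$.

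Thus the situation of Lemma \ref{lem.orthpr}, with $\mathcal T = DM$, $\generators = \generators _{DM}$, and $\mathcal G' = \generators ^{\mathrm{eff}}(q)$, applies verbatim. Part (c) of that lemma then furnishes a right adjoint $p_q : DM \rightarrow \northogonal{q}$ to the inclusion $j_q$, and asserts that it is a triangulated functor. This gives the statement of the proposition.

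There is no real obstacle: the content of the proposition lies entirely in the general machinery already assembled in \S \ref{sec.orth}, and the only work is to check that the input data fits. (For completeness one could trace through the proof of Lemma \ref{lem.orthpr}(c): part (a) of that lemma shows $\northogonal{q}$ is a triangulated subcategory closed under arbitrary coproducts, part (b) shows it is compactly generated by $\{s_{\generators ^{\mathrm{eff}}(q)}G : G \in \generators _{DM} \setminus \generators ^{\mathrm{eff}}(q)\}$, and then Theorem \ref{thm.gentower} applied to the inclusion $\northogonal{q} \hookrightarrow DM$ produces the triangulated right adjoint $p_q$.)
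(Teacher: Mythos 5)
Your proposal is correct and is exactly the paper's own argument: the proof in the text simply cites Lemma \ref{lem.orthpr}\eqref{lem.ortcp.c}, applied with $\mathcal T = DM$ and $\mathcal G' = \generators^{\mathrm{eff}}(q)$, noting that $DM$ is compactly generated. Your additional verification that the input data fits the hypotheses is a harmless elaboration of the same route.
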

\begin{proof}
Since $DM$ is compactly generated, we can apply 
\ref{lem.orthpr}\eqref{lem.ortcp.c}.
\end{proof}
	
\subsubsection{Birational covers} \label{def.birat.cover}		
		
We define $bc_{\leq q}=j_{q+1}\circ p_{q+1}$.  
	
The following proposition is well-known.
\begin{prop}  \label{counit-properties}
	The counit $bc_{\leq q}=j_{q}p_{q}\stackrel{\theta_{q}}{\rightarrow} id$ of the adjunction
	 constructed in \ref{prop.adj-ort.b}
	satisfies the following universal property: 
	
	For any $E$ in $DM$ and for any 
	$F\in \northogonal{q+1}$,
	the map $\theta ^{E}_{q}: bc_{\leq q}E \rightarrow E$
	in $DM$ induces an isomorphism of
	abelian groups:
		\[\xymatrix{\Hom _{DM}(F, bc_{\leq q}E) \ar[r]_-{\cong}^-{\theta ^{E}_{q\ast}}
		& 
			\Hom _{DM}(F, E) }
		\]
\end{prop}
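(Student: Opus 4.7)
The plan is to derive the universal property purely formally from the adjunction $j_{q+1} \dashv p_{q+1}$ of Proposition~\ref{prop.adj-ort.b}, combined with the fact that $j_{q+1}$ is fully faithful because $\northogonal{q+1}$ is a full subcategory of $DM$.  This is an instance of the following general principle: whenever a full embedding $j$ is left adjoint to $p$, postcomposition with the counit $jp \to \mathrm{id}$ is a bijection $\Hom(jF,jpE) \to \Hom(jF,E)$ for every $F$ in the source and $E$ in the target.

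Concretely, for $F \in \northogonal{q+1}$ and $E \in DM$, I would chain two natural isomorphisms.  The first is the adjunction bijection from Proposition~\ref{prop.adj-ort.b}:
\[
\Hom_{DM}(j_{q+1}F, E) \;\cong\; \Hom_{\northogonal{q+1}}(F, p_{q+1}E).
\]
The second uses that $j_{q+1}$ is a full embedding and therefore induces a bijection on Hom-sets:
\[
\Hom_{\northogonal{q+1}}(F, p_{q+1}E) \;\cong\; \Hom_{DM}(j_{q+1}F, j_{q+1}p_{q+1}E) \;=\; \Hom_{DM}(F, bc_{\leq q}E).
\]
Composing these two bijections produces the desired isomorphism $\Hom_{DM}(F,E) \cong \Hom_{DM}(F, bc_{\leq q}E)$.

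It then remains to check that this composite matches postcomposition by $\theta^E_q$.  Tracing the definitions: a morphism $\phi : F \to bc_{\leq q}E = j_{q+1}p_{q+1}E$ corresponds under the second isomorphism to a unique $\psi : F \to p_{q+1}E$ in $\northogonal{q+1}$ with $j_{q+1}\psi = \phi$; the inverse of the adjunction bijection then sends $\psi$ to $\theta^E_q \circ j_{q+1}\psi = \theta^E_q \circ \phi$, by the standard formula for the adjoint correspondent in terms of the counit.  This last identification follows from one of the triangle identities together with Remark~\ref{rmk.unit=iso}, which guarantees that the unit of the adjunction is an isomorphism.  The main (mild) obstacle is precisely this bookkeeping; there is no genuine content beyond applying the adjunction and the fullness of $j_{q+1}$.
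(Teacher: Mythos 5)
Your proof is correct and follows essentially the same route as the paper: chain the adjunction bijection for $(j_{q+1},p_{q+1})$ with the bijection on Hom-sets coming from the fullness of $j_{q+1}$. In fact you are slightly more careful than the paper's own proof, which omits the final bookkeeping step verifying that the composite bijection is really given by postcomposition with $\theta^{E}_{q}$.
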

\begin{proof}
	If $F\in \northogonal{q+1}$, then $\Hom _{DM}(F, E)=\Hom
	 _{DM}(j_{q+1}F, E)$.
	By adjointness:
		\[  \Hom _{DM}(j_{q+1}F, E)\cong \Hom _{\northogonal{q+1}}(F,p_{q+1}E).
		\]
	Since $\northogonal{q+1}$ is a full subcategory of $DM$, we deduce that:
	\[ \Hom _{\northogonal{q+1}}(F,p_{q+1}E)=
	\Hom _{DM}(j_{q+1}F, j_{q+1}p_{q+1}E)=
	\Hom _{DM}(F, bc_{\leq q}E).
	\]  
	This finishes the proof.
\end{proof}

\begin{rmk}  \label{rmk.univcar}
By construction $bc_{\leq q}E$ is in $\northogonal{q+1}$ 
\eqref{prop.adj-ort.b}-\eqref{def.birat.cover}, thus we conclude that the universal
property of \ref{counit-properties} characterizes $bc_{\leq q}E$ up to a unique
isomorphism.
\end{rmk}

\subsubsection{}  \label{subsec.bc.idemp}
Since $\northogonal{q+1}\subseteq \northogonal{q+2}$ \eqref{eq.birtower}, 
it follows from \eqref{counit-properties}-\eqref{rmk.univcar} that
$bc_{\leq q} \circ bc_{\leq q+1}\cong bc_{\leq q}$  and that there
exists  a canonical natural transformation $bc_{\leq q}
\rightarrow bc_{\leq q+1}$.

\begin{cor}  \label{cor.comp.tool1}
	Let $E$ be in $DM$.  Then, the natural
	 map $\theta ^{E}_{q}: bc_{\leq q}E\rightarrow E$
	is an isomorphism in $DM$ if and only if $E$ belongs to $\northogonal{q+1}$.
\end{cor}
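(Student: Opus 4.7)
The plan is to establish both implications as formal consequences of Proposition~\ref{counit-properties} and Remark~\ref{rmk.univcar}, with no further input needed.

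For the forward direction, I would observe that $bc_{\leq q}E = j_{q+1}p_{q+1}E$ lies in $\northogonal{q+1}$ by the very definition of $bc_{\leq q}$ in \ref{def.birat.cover}. Since $\northogonal{q+1}$ is a full subcategory of $DM$, it is closed under isomorphism, so if $\theta^{E}_{q}\colon bc_{\leq q}E \to E$ is an isomorphism, then $E$ automatically belongs to $\northogonal{q+1}$.

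For the backward direction, assume $E \in \northogonal{q+1}$. First, applying Proposition~\ref{counit-properties} with $F = E$ yields the isomorphism
\[
\theta^{E}_{q\ast}\colon \Hom_{DM}(E, bc_{\leq q}E) \xrightarrow{\ \cong\ } \Hom_{DM}(E, E),
\]
from which I can pull $\mathrm{id}_{E}$ back to a map $s\colon E \to bc_{\leq q}E$ with $\theta^{E}_{q} \circ s = \mathrm{id}_{E}$. To check that $s$ is also a left inverse, I would apply Proposition~\ref{counit-properties} a second time, now with $F = bc_{\leq q}E \in \northogonal{q+1}$: the two endomorphisms $s \circ \theta^{E}_{q}$ and $\mathrm{id}_{bc_{\leq q}E}$ both map under $\theta^{E}_{q\ast}$ to $\theta^{E}_{q}$, so the injectivity of $\theta^{E}_{q\ast}$ forces them to coincide. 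A slicker alternative is to invoke Remark~\ref{rmk.univcar} directly: the pair $(E, \mathrm{id}_{E})$ manifestly satisfies the same universal property as $(bc_{\leq q}E, \theta^{E}_{q})$ when $E \in \northogonal{q+1}$, and the stated uniqueness up to unique isomorphism then forces $\theta^{E}_{q}$ itself to be an isomorphism.

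I do not anticipate any genuine obstacle: the corollary is a Yoneda-style consequence of the universal property of the counit of the adjunction $j_{q+1} \dashv p_{q+1}$ supplied by Proposition~\ref{prop.adj-ort.b}, and no further input from $DM$ (such as compact generation or the specific form of $\generators^{\mathrm{eff}}(n)$) is required.
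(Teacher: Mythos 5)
Your proposal is correct and follows essentially the same route as the paper: the forward direction uses that $bc_{\leq q}E$ lies in $\northogonal{q+1}$ and that this subcategory is closed under isomorphism, and the backward direction is the Yoneda-style argument from the universal property of the counit in \ref{counit-properties}, which the paper states more tersely but identically in substance. No gaps.
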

\begin{proof}
	First, we assume that $\theta ^{E}_{q}$ is an isomorphism in $DM$.  We observe that
	$bc_{\leq q}E$ is in $\northogonal{q+1}$ (see \ref{prop.adj-ort.b} and \ref{def.birat.cover}).   
	Hence, we deduce that $E$ is also in $\northogonal{q+1}$ since
	it is a full triangulated subcategory of $DM$.
	
	Finally, we assume that $E$ is in $\northogonal{q+1}$.  Thus,
	$\theta ^{E}_{q}$ is a map in $\northogonal{q+1}$ since $bc_{\leq q}E$ is in
	$\northogonal{q+1}$ by construction.  Therefore, by \ref{counit-properties} we
	 deduce that
	$\theta ^{E}_{q}$ is an isomorphism in $\northogonal{q+1}$, and hence an isomorphism in
	 $DM$.
\end{proof}

\begin{thm}  \label{dual.slicefil}
There exist triangulated functors: 
\[ bc_{q+1/q}:DM \rightarrow DM
\]
together with natural transformations:
		\[ \xymatrix@R=.6pt{\pi _{q+1}: bc_{\leq q+1} \ar[r]& bc_{q+1/q} \\
								\sigma _{q+1}: bc_{q+1/q} \ar[r]& [1]\circ bc_{\leq q}}
		\]
	such that for any $E$ in $DM$ the following
	 conditions hold:
	\begin{enumerate}
		\item	\label{dual.slicefil.a} There is a natural
					distinguished triangle in $DM$:
					\begin{equation}
						\label{eq.dual.slicefil}
							\xymatrix{bc_{\leq q}E \ar[r]& bc_{\leq q+1}E \ar[r]^-{\pi_{q+1}}& 
							 bc_{q+1/q}E \ar[r]^-{\sigma_{q+1}}& bc_{\leq q}E[1]}
					\end{equation}
		\item	\label{dual.slicefil.b} $bc_{q+1/q}E$ is in $\northogonal{q+2}$.
		\item	\label{dual.slicefil.c} $bc_{q+1/q}E$ is in $(\northogonal{q+1})^{\perp}$.
		  Namely,
					for any  $F$ in $\northogonal{q+1}$:
					\[ \Hom _{DM}(F, bc_{q+1/q}E)=0.\]  
	\end{enumerate}
\end{thm}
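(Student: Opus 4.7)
The plan is to apply Theorem \ref{thm.gentower} to the inclusion $\northogonal{q+1}\hookrightarrow\northogonal{q+2}$ inside the compactly generated category $\northogonal{q+2}$, and then to transport the resulting triangle to $DM$ via the fully faithful inclusion $j_{q+2}:\northogonal{q+2}\to DM$.

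First, by Lemma \ref{lem.orthpr}(\ref{lem.ortcp.b}) both $\northogonal{q+1}$ and $\northogonal{q+2}$ are compactly generated in the sense of Neeman, and in view of \eqref{eq.birtower} the former is a full triangulated subcategory of the latter, closed under arbitrary coproducts. Hence Theorem \ref{thm.gentower} applies and provides a right adjoint $r$ to the inclusion $i:\northogonal{q+1}\to\northogonal{q+2}$, a triangulated functor $s_{q+1}:\northogonal{q+2}\to\northogonal{q+2}$, natural transformations $\pi$ and $\sigma$, and, for every $E'\in\northogonal{q+2}$, a natural distinguished triangle
\[
(i\circ r)(E')\longrightarrow E'\longrightarrow s_{q+1}(E')\longrightarrow (i\circ r)(E')[1]
\]
with $s_{q+1}(E')\in(\northogonal{q+1})^{\perp}$ computed inside $\northogonal{q+2}$.

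Next I would set $bc_{q+1/q}:=j_{q+2}\circ s_{q+1}\circ p_{q+2}$, a composition of triangulated functors satisfying (after identifying $\northogonal{q+2}$ with its image under $j_{q+2}$) $bc_{q+1/q}E=s_{q+1}(bc_{\leq q+1}E)$. Applying the Neeman triangle to $E':=bc_{\leq q+1}E\in\northogonal{q+2}$ yields a natural triangle whose first vertex is $(i\circ r)(bc_{\leq q+1}E)$; the crux is to identify this vertex with $bc_{\leq q}E$ compatibly with the canonical arrow to $bc_{\leq q+1}E$. Both objects lie in $\northogonal{q+1}$ and carry canonical maps to $bc_{\leq q+1}E$: the counit of $(i,r)$ on one side, and the canonical morphism of \ref{subsec.bc.idemp} on the other. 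For every $F\in\northogonal{q+1}$, adjunction combined with fullness yields $\Hom_{DM}(F,(i\circ r)(bc_{\leq q+1}E))\cong\Hom_{DM}(F,bc_{\leq q+1}E)$, while Proposition \ref{counit-properties}, applied both at level $q$ and at level $q+1$, identifies each of $\Hom_{DM}(F,bc_{\leq q}E)$ and $\Hom_{DM}(F,bc_{\leq q+1}E)$ with $\Hom_{DM}(F,E)$. A Yoneda argument in the overcategory of $bc_{\leq q+1}E$ inside $\northogonal{q+1}$ then delivers a unique isomorphism $bc_{\leq q}E\cong(i\circ r)(bc_{\leq q+1}E)$ compatible with the two canonical arrows, and this isomorphism is natural in $E$.

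Substituting this identification into the Neeman triangle produces \eqref{eq.dual.slicefil} with $\pi_{q+1}$ and $\sigma_{q+1}$ given by the images of Neeman's $\pi$ and $\sigma$, establishing (\ref{dual.slicefil.a}). Part (\ref{dual.slicefil.b}) is immediate because $s_{q+1}$ takes values in $\northogonal{q+2}$, and (\ref{dual.slicefil.c}) follows because Theorem \ref{thm.gentower}(\ref{thm.gentower.b}) gives $\Hom_{\northogonal{q+2}}(F,s_{q+1}(bc_{\leq q+1}E))=0$ for every $F\in\northogonal{q+1}$, which coincides with $\Hom_{DM}(F,bc_{q+1/q}E)$ by fullness of $j_{q+2}$. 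The main obstacle is the compatibility embedded in the Yoneda step: one must verify that the supplied isomorphism really does intertwine the two canonical arrows to $bc_{\leq q+1}E$, for otherwise the transported $\pi_{q+1}$ and $\sigma_{q+1}$ would fail to assemble into natural transformations of the stated form.
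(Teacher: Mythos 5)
Your proposal is correct and follows essentially the same route as the paper: apply Theorem \ref{thm.gentower} to the compactly generated pair $\northogonal{q+1}\subseteq\northogonal{q+2}$ and identify the first vertex of the resulting triangle with $bc_{\leq q}E$ via the compatibility $bc_{\leq q}\circ bc_{\leq q+1}\cong bc_{\leq q}$ of \ref{subsec.bc.idemp}, which your Yoneda step re-derives from the universal property \ref{counit-properties}. The compatibility worry you flag at the end is indeed the only point needing care, and it is settled exactly as you indicate (uniqueness of the isomorphism over $bc_{\leq q+1}E$, equivalently uniqueness of adjoints giving $r\circ p_{q+2}\cong p_{q+1}$).
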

\begin{proof}
Since $DM$ is compactly generated, we can apply \ref{lem.orthpr}\eqref{lem.ortcp.b}
to conclude that the triangulated categories $\northogonal{q+1}\subseteq
\northogonal{q+2}$ (see \ref{eq.birtower})
are compactly generated.  Thus,  the result follows by combining
\ref{thm.gentower} (see \ref{prop.adj-ort.b}-\ref{def.birat.cover}) and \ref{subsec.bc.idemp}.
\end{proof}

\begin{thm}  \label{thm.dbdual}
There exist triangulated functors:
\[ bc_{>q}:DM \rightarrow DM
\]
	together with natural transformations:
		\[ \xymatrix@R=.6pt{\pi _{>q}:id \ar[r]& bc_{>q} \\
								\sigma _{>q}:bc_{>q} \ar[r]& [1]\circ bc_{\leq q}}
		\]
	such that for any $E$ in $DM$ the following
	 conditions hold:
	\begin{enumerate}
		\item	\label{thm.dbdual.a} There is a natural
					distinguished triangle in $DM$:
					\begin{equation}
						\label{eq.dbdual}
							\xymatrix{bc_{\leq q}E \ar[r]& E \ar[r]^-{\pi_{>q}}& 
							 bc_{>q}E \ar[r]^-{\sigma_{>q}}& (bc_{\leq q}E)[1]}
					\end{equation}
		\item	\label{thm.dbdual.b} $bc_{>q}E$ is in
					$(\northogonal{q+1})^{\perp}$.  Namely,  for any $F$ in $\northogonal{q+1}$:
					\[\Hom _{DM}(F,bc_{>q}E)=0.\]  
	\end{enumerate}
\end{thm}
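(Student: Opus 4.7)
The plan is to invoke Theorem \ref{thm.gentower} a second time, but applied to the inclusion of the \emph{orthogonal} subcategory $j_{q+1}:\northogonal{q+1}\rightarrow DM$ rather than to the inclusion of an effective subcategory. This exchanges the roles played in the construction of $bc_{\leq q}$: here the category $\northogonal{q+1}$ will play the role of $\mathcal T'$ in \ref{thm.gentower}, so the distinguished triangle produced by Neeman's theorem will have the form \eqref{eq.dbdual} rather than \eqref{eq.gentower} with the usual convention.

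First I would verify the hypotheses of \ref{thm.gentower} with $\mathcal T = DM$ and $\mathcal T' = \northogonal{q+1}$. The category $DM$ is compactly generated by $\generators _{DM}$ as recalled in \ref{sub.gens}. The key input is that $\northogonal{q+1}$ is itself a compactly generated triangulated category; this is exactly the content of Lemma \ref{lem.orthpr}\eqref{lem.ortcp.b}, which provides an explicit set of compact generators, namely $\{s_{\generators^{\mathrm{eff}}(q+1)}G : G\in \generators_{DM}\backslash \generators^{\mathrm{eff}}(q+1)\}$. Thus \ref{thm.gentower} applies and yields a right adjoint $r$ to $j_{q+1}$ together with a triangulated functor $s:DM\rightarrow DM$ and natural transformations $\pi:id\rightarrow s$ and $\sigma:s\rightarrow [1]\circ (j_{q+1}\circ r)$.

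Next I would identify $j_{q+1}\circ r$ with $bc_{\leq q}$. By uniqueness of right adjoints (up to unique natural isomorphism), the functor $r$ agrees with $p_{q+1}$ from Proposition \ref{prop.adj-ort.b}, so $j_{q+1}\circ r = j_{q+1}\circ p_{q+1} = bc_{\leq q}$ by Definition \ref{def.birat.cover}. With this identification I set $bc_{>q} := s$, $\pi_{>q}:=\pi$, and $\sigma_{>q}:=\sigma$. Statement \eqref{thm.dbdual.a} is then the natural distinguished triangle \eqref{eq.gentower} produced by Theorem \ref{thm.gentower}, rewritten with the present notation. Statement \eqref{thm.dbdual.b} is precisely condition \eqref{thm.gentower.b} of that theorem, since it asserts $s(E)\in (\mathcal T')^{\perp} = (\northogonal{q+1})^{\perp}$, which unwinds via Definition \ref{def.orthn} to the vanishing $\Hom_{DM}(F,bc_{>q}E) = 0$ for every $F\in \northogonal{q+1}$.

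No step is really an obstacle: the work has been front-loaded into \ref{lem.orthpr} and \ref{thm.gentower}. The only point requiring a moment of care is the identification $j_{q+1}\circ r \cong bc_{\leq q}$, which could conceivably be muddled by the two applications of Neeman's machine (once to build $bc_{\leq q}$, once to build $bc_{>q}$); this is settled by the uniqueness of adjoints.
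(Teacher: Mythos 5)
Your proposal is correct and is essentially identical to the paper's own argument: the paper likewise applies Theorem \ref{thm.gentower} with $\mathcal T = DM$ and $\mathcal T' = \northogonal{q+1}$, using Lemma \ref{lem.orthpr}\eqref{lem.ortcp.b} to see that $\northogonal{q+1}$ is compactly generated, and then identifies $j_{q+1}\circ p_{q+1}$ with $bc_{\leq q}$ via Definition \ref{def.birat.cover}.
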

\begin{proof}
Combining \ref{sub.gens} and \ref{lem.orthpr}\eqref{lem.ortcp.b}, we deduce that
the triangulated categories $\northogonal{q+1}\subseteq DM$ 
are compactly generated.  Thus,  the result follows from 
\ref{thm.gentower} (see \ref{prop.adj-ort.b}-\ref{def.birat.cover}).	
\end{proof}

\begin{thm}
		\label{thm.oct-ax}
For any $E$ in $DM$, there exists the following commutative diagram in $DM$: 
\[ \xymatrix{bc_{\leq q}E \ar[rr] \ar@{=}[d] && bc_{\leq q+1}E \ar[rr]^-{\pi _{q+1}} \ar[d]&&
	bc_{q+1/q}E \ar[rr]^-{\sigma _{q+1}} \ar[d]&& (bc_{\leq q}E)[1] \ar@{=}[d]\\
	bc_{\leq q}E \ar[rr] \ar[d]&& E \ar[rr]^-{\pi _{>q}} \ar[d]_-{\pi _{>q+1}}&&
	bc_{>q}E \ar[rr]^-{\sigma _{>q}} \ar[d]&& (bc_{\leq q}E)[1] \ar[d]\\
	0 \ar[rr] \ar[d]&& bc_{>q+1}E \ar@{=}[rr] \ar[d]_-{\sigma _{>q+1}}&& 
	bc_{>q+1}E \ar[rr] \ar[d]&& 0 \ar[d]\\
	(bc_{\leq q}E)[1] \ar[rr]&& (bc_{\leq q+1}E)[1] \ar[rr]_-{[1]\circ \pi_{q+1}}&&
	(bc_{q+1/q}E)[1] \ar[rr]_-{[1]\circ \sigma _{q+1}}&& (bc_{\leq q}E)[2]}
\]
where all the rows and columns are distinguished triangles in $DM$.
\end{thm}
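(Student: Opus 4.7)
The plan is to derive the entire $4\times 4$ diagram from a single application of the octahedral axiom to the composable pair
\[ bc_{\leq q}E \longrightarrow bc_{\leq q+1}E \stackrel{\theta^{E}_{q+1}}{\longrightarrow} E, \]
combined with the defining distinguished triangles from Theorems \ref{dual.slicefil} and \ref{thm.dbdual}. First I would check that the upper-left square of the diagram commutes, that is, that the composite $bc_{\leq q}E \to bc_{\leq q+1}E \stackrel{\theta^{E}_{q+1}}{\to} E$ equals the counit $\theta^{E}_{q}$. This is immediate from the construction of the natural transformation $bc_{\leq q} \to bc_{\leq q+1}$ in \ref{subsec.bc.idemp}: because $bc_{\leq q}E \in \northogonal{q+1} \subseteq \northogonal{q+2}$, the universal property \ref{counit-properties} at level $q+1$ forces $\theta^{E}_{q}$ to factor uniquely through $\theta^{E}_{q+1}$, and that unique factorization is by definition the component at $E$ of the natural transformation.

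Next, I would apply the octahedral axiom to the composition above. The cones of the three relevant morphisms
\[ bc_{\leq q}E \to bc_{\leq q+1}E,\qquad bc_{\leq q+1}E \to E,\qquad bc_{\leq q}E \to E \]
are respectively $bc_{q+1/q}E$, $bc_{>q+1}E$, and $bc_{>q}E$, by \ref{dual.slicefil}\eqref{dual.slicefil.a} and \ref{thm.dbdual}\eqref{thm.dbdual.a}. The octahedron therefore produces a distinguished triangle
\[ bc_{q+1/q}E \longrightarrow bc_{>q}E \longrightarrow bc_{>q+1}E \longrightarrow (bc_{q+1/q}E)[1], \]
which furnishes the third column of the diagram, together with the commutativity of the two squares joining it to the first two rows. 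All other pieces of the diagram are already in hand: the first two rows come from \ref{dual.slicefil} and \ref{thm.dbdual}, the second column from \ref{thm.dbdual}, the fourth row is the shift of the first, and the first column, third row, and fourth column are tautological distinguished triangles of the form $X \stackrel{\mathrm{id}}{\to} X \to 0 \to X[1]$ (or a rotation thereof).

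The main bookkeeping task, and the only place where something beyond a mechanical reading of the octahedral axiom is required, is to identify the connecting morphisms produced by the octahedron with the specific natural transformations $\pi_{q+1}, \sigma_{q+1}, \pi_{>q}, \sigma_{>q}, \pi_{>q+1}, \sigma_{>q+1}$ labeled in the statement. This identification reduces to the uniqueness clauses in \ref{counit-properties} and \ref{rmk.univcar}, together with the naturality of the adjunctions $(j_{q+1},p_{q+1})$ and $(j_{q+2},p_{q+2})$ in $E$. In particular, the whole statement can be viewed as a direct instance of the $3\times 3$ lemma for triangulated categories (see \cite{MR1812507}) applied to the commutative square with vertices $bc_{\leq q}E, bc_{\leq q+1}E, bc_{\leq q}E, E$, in which the left vertical arrow is the identity and the right vertical arrow is $\theta_{q+1}^{E}$.
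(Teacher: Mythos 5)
Your argument is correct and is essentially the paper's own proof: both apply the octahedral axiom to the composite $bc_{\leq q}E \to bc_{\leq q+1}E \stackrel{\theta^{E}_{q+1}}{\to} E$ (whose commutativity with $\theta^{E}_{q}$ comes from \ref{subsec.bc.idemp}), with the three prescribed triangles supplied by \ref{dual.slicefil} and \ref{thm.dbdual}, the octahedron producing the third column. Your extra care about identifying the labeled maps is harmless but not really needed, since all three input triangles (and hence the maps $\pi_{q+1},\sigma_{q+1},\pi_{>q},\sigma_{>q},\pi_{>q+1},\sigma_{>q+1}$) are prescribed in advance when invoking the axiom.
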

\begin{proof}
	The result follows from  \ref{dual.slicefil}, \ref{thm.dbdual} and
	the octahedral axiom applied to the following commutative diagram
	(see \ref{subsec.bc.idemp}):
		\[  \xymatrix{bc_{\leq q}E \ar[rr] \ar[dr]&& bc_{\leq q+1}E \ar[dl]\\
								& E &}
		\]
\end{proof}

\subsubsection{The spectral sequence}

By \ref{thm.oct-ax}, for every $E$ in $DM$ there is a tower in $DM$:

\begin{align}	\label{F.birtow}
	\xymatrix@C=1.5pc{\cdots \ar[r]
					& \ar[dr]|{\theta _{-1}^{E}} bc_{\leq -1}(E) 
					\ar[r] & \ar[d]|{\theta _{0}^{E}} bc_{\leq 0}(E) \ar[r] 
					& bc_{\leq 1}(E) \ar[r] \ar[dl]|{\theta _{1}^{E}} & \cdots  \\						
					&&  E &&}
\end{align}
We will call \eqref{F.birtow} the birational tower of $E$.

\begin{rmk}  
	By \ref{prop.adj-ort.b} and \ref{def.birat.cover}, the birational tower \eqref{F.birtow}
	 is functorial with respect to
	morphisms in $DM$.
\end{rmk}

\begin{thm}  \label{thm.birspecseq}
	Let $G, K$ be in $DM$.  Then
	there is a spectral sequence of homological type with term $E^{1}_{p,q}=\Hom
	 _{DM}(G, (bc_{p/p-1}K)[q-p])$
	and where the abutment is given by the associated graded group for the filtration
	 $F_{\bullet}$ of $\Hom _{DM} (G,K)$ defined
	by the image of $\theta _{p\ast}^{K}:\Hom _{DM}(G, bc_{\leq p}K)\rightarrow
	 \Hom _{DM}(G, K)$, or
	equivalently the kernel of $\pi_{>q}:\Hom _{DM}(G, K)\rightarrow \Hom
	 _{DM}(G, bc_{>q}K)$.
\end{thm}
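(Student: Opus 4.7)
The plan is to construct the spectral sequence as the one associated to the exact couple derived from the birational tower \eqref{F.birtow}. For each $p\in\mathbb Z$, Theorem \ref{dual.slicefil} supplies a distinguished triangle
\[
bc_{\leq p-1}K \longrightarrow bc_{\leq p}K \xrightarrow{\pi_{p}} bc_{p/p-1}K \xrightarrow{\sigma_{p}} (bc_{\leq p-1}K)[1],
\]
and applying $\Hom_{DM}(G,-)$ together with all translations yields a long exact sequence of abelian groups for each $p$. Assembled over $p$, these long exact sequences produce an exact couple of bigraded abelian groups
\[
D^{1}_{p,q}=\Hom_{DM}\bigl(G,(bc_{\leq p}K)[q-p]\bigr),\qquad E^{1}_{p,q}=\Hom_{DM}\bigl(G,(bc_{p/p-1}K)[q-p]\bigr),
\]
whose structure maps are induced by the transition $bc_{\leq p-1}K\to bc_{\leq p}K$ (extracted from the diagram of Theorem \ref{thm.oct-ax}) and by $\pi_{p\ast}$, $\sigma_{p\ast}$. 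A direct check of the indexing shows these three maps carry the bidegrees required to yield, via the Massey derivation, a spectral sequence of homological type with the prescribed $E^{1}$-term.

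Next I would identify the filtration on the abutment. The map $\theta_{p\ast}^{K}:D^{1}_{p,q}\to \Hom_{DM}(G,K[q-p])$ is exactly the one induced by the counit $\theta_{p}^{K}$ of \ref{counit-properties}, and by the exact-couple formalism the image of this map is precisely the filtration step $F_{p}$ appearing at the level of the derived couples. The equality $F_{p}=\operatorname{Im}(\theta_{p\ast}^{K})=\operatorname{Ker}(\pi_{>p\,\ast})$ is immediate from Theorem \ref{thm.dbdual}: applying $\Hom_{DM}(G,-)$ to the distinguished triangle $bc_{\leq p}K\to K\xrightarrow{\pi_{>p}}bc_{>p}K\to (bc_{\leq p}K)[1]$ produces the required long exact sequence. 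The standard Massey identification then provides the canonical isomorphism $E^{\infty}_{p,q}\cong F_{p}/F_{p-1}$ of the associated graded.

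The main obstacle is essentially bookkeeping: keeping the bidegrees of $i$, $j$, $k$ and of each successive $d^{r}$ internally consistent, and checking carefully that the filtration produced by the formalism of exact couples genuinely coincides with the filtration $F_{\bullet}$ defined by the images of $\theta_{p\ast}^{K}$. Since the statement asserts only that the abutment is the associated graded of $F_{\bullet}$ (not strong convergence to $\Hom_{DM}(G,K)$), no Mittag--Leffler condition or $\varprojlim^{1}$-vanishing is needed; strong convergence would require additional input such as the finiteness of the filtration proved later in \ref{thm.filt.motcoh.finite}.
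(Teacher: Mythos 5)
Your proposal is correct and follows essentially the same route as the paper, whose proof is a one-line appeal to the tower \eqref{F.birtow} and the octahedral-axiom diagram of \ref{thm.oct-ax}; you have simply spelled out the standard exact-couple construction that the paper leaves implicit, including the correct identification $\operatorname{Im}(\theta_{p\ast}^{K})=\operatorname{Ker}(\pi_{>p\,\ast})$ via \ref{thm.dbdual} and the appropriate caveat about convergence.
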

\begin{proof}
Since $DM$ is a triangulated category, the result follows from 
\ref{thm.oct-ax} and \ref{F.birtow}.
\end{proof}	

\subsection{Effective covers}  \label{subsec.ssfil}

Consider again the family $\mathcal S=\{ \generators ^{\mathrm{eff}}(n) \}_{n\in \mathbb Z}$
of subsets of $\generators _{DM}$ \eqref{eq.DMgens}-\eqref{eq.DMeffgenstw}.  
Since $DM$ and $\DMeff (n)$ are compactly generated (see \ref{sub.gens} and
\ref{subsubsec.neffDM}), it follows from \ref{thm.gentower} that the inclusion $i_{n}:\DMeff
(n)\rightarrow DM$ admits a right adjoint $r_{n}:DM\rightarrow \DMeff (n)$ which is a
triangulated functor.  We will write $f_{n}$ for the triangulated functor $i_{n}\circ r_{n}:DM
\rightarrow DM$.  

The functor $f_{n}$ is the $(n-1)$-effective cover in the slice filtration
studied in \cite{MR1977582}, \cite{MR2249535} in the context of the Morel-Voevodsky
motivic stable homotopy category $\mathcal{SH}$ and Voevodsky's
effective triangulated category of motives $\DMeff$, respectively.

\begin{rmk}  \label{rmk.univeffcar}
Since $f_{q}E\in \DMeff (q)$,
an argument parallel to \eqref{counit-properties}-\eqref{rmk.univcar} shows that
$f_{q}E$ is characterized up to a unique isomorphism by the following
universal property:

For any $F\in \DMeff (q)$, the counit of the adjunction $(i_{n},r_{n})$,
$\epsilon ^{E}_{q}: f_{q}E \rightarrow E$ in $DM$ induces an isomorphism of
abelian groups:
\[\xymatrix{\Hom _{DM}(F, f_{q}E) \ar[r]_-{\cong}^-{\epsilon ^{E}_{q\ast}} & 
	\Hom _{DM}(F, E) }
\]
\end{rmk}

\begin{lem}  \label{lem.comp.orteff}
Let $p$, $q\in \mathbb Z$ with $q\geq p$.  Then:
\begin{enumerate}
\item \label{lem.comp.orteff.a}  $f_{p}(bc_{\leq q}E)$ is in 
$\DMeff (p) \cap \northogonal{q+1}$.
\item \label{lem.comp.orteff.b}  $f_{p}(bc_{\leq q}E)$ is characterized up to a unique
isomorphism by the following universal property:

For any $F\in \DMeff (p)\cap \northogonal{q+1}$, the compositon of counits
(see \ref{counit-properties} and \ref{rmk.univeffcar})
$\theta ^{E}_{q}\circ \epsilon ^{bc_{\leq q}E}_{p}: f_{p}(bc_{\leq q}E) \rightarrow E$ in $DM$ 
induces an isomorphism of abelian groups:
\[\xymatrix{\Hom _{DM}(F, f_{p}(bc_{\leq q}E)) \ar[r]_-{\cong} & 
	\Hom _{DM}(F, E) }
\]
\end{enumerate}
\end{lem}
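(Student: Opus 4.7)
The plan is to deduce both assertions by combining the universal properties of $f_p$ (Remark \ref{rmk.univeffcar}) and of $bc_{\leq q}$ (Proposition \ref{counit-properties}), exploiting the hypothesis $q \geq p$.

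For \eqref{lem.comp.orteff.a}, membership in $\DMeff (p)$ is immediate, since $f_p = i_p \circ r_p$ factors through $\DMeff (p)$ by construction. To establish that $f_p(bc_{\leq q}E) \in \northogonal{q+1}$, by Remark \ref{rmk.orthchk} it suffices to check that $\Hom _{DM}(G[n], f_p(bc_{\leq q}E)) = 0$ for every $G \in \generators^{\mathrm{eff}}(q+1)$ and every $n \in \mathbb Z$. The hypothesis $q \geq p$ gives $q+1 > p$, hence $\generators^{\mathrm{eff}}(q+1) \subseteq \generators^{\mathrm{eff}}(p)$, so in particular $G[n]$ lies in $\DMeff (p)$. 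Applying the universal property of $f_p$ (Remark \ref{rmk.univeffcar}) yields
\[
	\Hom _{DM}(G[n], f_p(bc_{\leq q}E)) \cong \Hom _{DM}(G[n], bc_{\leq q}E),
\]
and the right-hand side vanishes because $bc_{\leq q}E \in \northogonal{q+1}$ by Proposition \ref{prop.adj-ort.b} and \ref{def.birat.cover}.

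For \eqref{lem.comp.orteff.b}, given $F \in \DMeff (p) \cap \northogonal{q+1}$, I would compose the two universal properties in sequence. Since $F \in \DMeff (p)$, Remark \ref{rmk.univeffcar} yields an isomorphism $\Hom _{DM}(F, f_p(bc_{\leq q}E)) \cong \Hom _{DM}(F, bc_{\leq q}E)$ induced by the counit $\epsilon ^{bc_{\leq q}E}_{p}$. Since $F \in \northogonal{q+1}$, Proposition \ref{counit-properties} yields an isomorphism $\Hom _{DM}(F, bc_{\leq q}E) \cong \Hom _{DM}(F, E)$ induced by the counit $\theta ^E _{q}$. Composing these gives the desired isomorphism, visibly induced by the composition $\theta ^E _q \circ \epsilon ^{bc_{\leq q}E}_p$.

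The argument is essentially formal, and no step presents a real obstacle. The only subtle point, in \eqref{lem.comp.orteff.a}, is recognizing that the hypothesis $q \geq p$ ensures $\generators^{\mathrm{eff}}(q+1) \subseteq \DMeff (p)$, which is precisely what permits invoking the universal property of $f_p$ on the generators that detect $\northogonal{q+1}$. Uniqueness up to unique isomorphism of the object characterized in \eqref{lem.comp.orteff.b} is then automatic from the universal property, exactly as in Remark \ref{rmk.univcar}.
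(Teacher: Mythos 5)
Your proposal is correct and follows essentially the same route as the paper: membership in $\DMeff(p)$ by construction, orthogonality to $\DMeff(q+1)$ via the inclusion $\DMeff(q+1)\subseteq \DMeff(p)$ (forced by $q\geq p$) combined with the universal property of $\epsilon^{bc_{\leq q}E}_{p}$ and the fact that $bc_{\leq q}E\in \northogonal{q+1}$, and then part (2) by composing the two counit isomorphisms. The only cosmetic difference is that you reduce to generators via \ref{rmk.orthchk} where the paper tests against arbitrary objects of $\DMeff(q+1)$ directly; both are valid.
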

\begin{proof}
\eqref{lem.comp.orteff.a}:  We observe that $f_{p}(bc_{\leq q}E)$ is in $\DMeff (p)$
by construction.  Now, let $F\in \DMeff (q+1)$.  To conclude it suffices to check that
$\Hom _{DM}(F,f_{p}(bc_{\leq q}E))=0$.

We observe that $\DMeff (q+1)
\subseteq \DMeff (p)$ since $q+1>p$ (see \ref{eq.DMeffgenstw} and
\ref{subsubsec.neffDM}), so $F$ is also in $\DMeff (p)$.  Then, by the universal
property \ref{rmk.univeffcar}:
\[ \Hom _{DM}(F, f_{p}(bc_{\leq q}E)) \cong  
	\Hom _{DM}(F, bc_{\leq q}E).
\]
Finally, $\Hom _{DM}(F, bc_{\leq q}E)=0$ since $F\in \DMeff (q+1)$ and
$bc_{\leq q}E\in \northogonal{q+1}$ (see \ref{rmk.univcar}).

\eqref{lem.comp.orteff.b}:  Follows directly by combining \ref{rmk.univcar},
\ref{rmk.univeffcar} and \eqref{lem.comp.orteff.a} above.
\end{proof}

\begin{prop}  \label{prop.Tatetwist}
Let $E\in DM$ and $n$, $q\in \mathbb Z$.  Then:
\begin{enumerate}
\item \label{prop.Tatetwist.a}  There is a natural isomorphism
$t_n^{bc}(E):(bc_{\leq q}E)(n)\rightarrow bc_{\leq q+n}(E(n))$  such that
$\theta _{q+n} ^{E(n)} \circ t_n^{bc}(E)=\theta _q ^E (n)$ (see \ref{counit-properties}).
\item \label{prop.Tatetwist.b}  There is a natural isomorphism
$t_n^{\mathrm{eff}}(E):(f_{q}E)(n)\rightarrow f_{q+n}(E(n))$ such that
$\epsilon _{q+n} ^{E(n)} \circ t_n^{\mathrm{eff}}(E)=\epsilon _q ^E (n)$ (see
\ref{rmk.univeffcar}).
\end{enumerate}
\end{prop}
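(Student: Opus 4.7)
The plan is to exploit that the Tate twist $(-)(n)\colon DM \to DM$ is an auto-equivalence of triangulated categories with inverse $(-)(-n)$, and to show that this equivalence restricts to equivalences on both the subcategories $\DMeff(q)$ and $\northogonal{q+1}$ (with appropriate index shifts), so that (a) and (b) will follow formally from the universal characterizations \ref{rmk.univcar} and \ref{rmk.univeffcar}.

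First, I would record the compatibility of $(-)(n)$ with the filtrations.  The functor $(-)(n)$ carries the generating set $\generators ^{\mathrm{eff}}(q)=\{M(X)(p):X\in Sm_{k},\ p\geq q\}$ bijectively to $\generators ^{\mathrm{eff}}(q+n)$; since it is a triangulated equivalence commuting with arbitrary coproducts, it restricts to an equivalence $\DMeff(q)\to \DMeff(q+n)$.  For the orthogonal categories, using the natural isomorphism $\Hom_{DM}(M(X)(p+n),E(n))\cong \Hom_{DM}(M(X)(p),E)$ together with \ref{rmk.orthchk}, one checks that $E\in\northogonal{q+1}$ if and only if $E(n)\in\northogonal{q+n+1}$.

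Next, for (a), the counit $\theta_{q}^{E}\colon bc_{\leq q}E\to E$ twisted by $(n)$ gives a map $\theta_{q}^{E}(n)\colon (bc_{\leq q}E)(n)\to E(n)$ whose source lies in $\northogonal{q+n+1}$ by the previous step.  Applying the universal property \ref{counit-properties} (see also \ref{rmk.univcar}) with $F=(bc_{\leq q}E)(n)$ and target $E(n)$, one obtains a unique map $t_n^{bc}(E)\colon (bc_{\leq q}E)(n)\to bc_{\leq q+n}(E(n))$ satisfying $\theta_{q+n}^{E(n)}\circ t_n^{bc}(E)=\theta_{q}^{E}(n)$.  To show $t_n^{bc}(E)$ is invertible, I would run the same construction starting from the counit $\theta_{q+n}^{E(n)}$ and twisting by $(-n)$: this produces a map $(bc_{\leq q+n}(E(n)))(-n)\to bc_{\leq q}E$, and twisting back by $(n)$ gives a candidate inverse.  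That the two composites are identities follows from the uniqueness clauses in the respective universal properties.

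For (b), the argument is word-for-word the same, with \ref{rmk.univeffcar} replacing \ref{rmk.univcar} and $\DMeff(q)$ replacing $\northogonal{q+1}$; naturality in $E$ of both $t_n^{bc}$ and $t_n^{\mathrm{eff}}$ is likewise forced by the uniqueness assertions.  The main obstacle is purely bookkeeping---keeping the index shifts straight under the twist---and is dealt with once the compatibility of $(-)(n)$ with $\DMeff(q)$ and $\northogonal{q+1}$ is recorded in the first step; the rest is a formal consequence of the universal properties.
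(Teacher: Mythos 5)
Your proposal is correct and follows essentially the same route as the paper: observe that the Tate twist is a triangulated auto-equivalence carrying $\DMeff(q)$ onto $\DMeff(q+n)$ and $\northogonal{q+1}$ onto $\northogonal{q+n+1}$, and then deduce both isomorphisms (and their naturality and the compatibility with the counits) from the universal characterizations in \ref{counit-properties}--\ref{rmk.univcar} and \ref{rmk.univeffcar}. Your write-up simply fills in the details that the paper's two-sentence proof leaves implicit.
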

\begin{proof}
We observe that $DM\rightarrow DM$, $E\mapsto E(n)$ is a triangulated equivalence
of categories which maps $\DMeff (q)$ surjectively onto $\DMeff (q+n)$ (see
\ref{subsubsec.neffDM} and \ref{eq.DMeffgenstw}), and hence also
$\northogonal{q+1}$ surjectively onto $\northogonal{q+n+1}$.  Hence,
the result follows from \ref{counit-properties}-\ref{rmk.univcar} (resp.
\ref{rmk.univeffcar}).
\end{proof}

\section{Kahn-Sujatha unramified cohomology}  \label{sec.KSunrc}

In this section, we will show that the right adjoint $p_{1}:DM\rightarrow \northogonal{1}$
constructed in \ref{prop.adj-ort.b}
is a non-effective version of the Kahn-Sujatha unramified cohomology functor
$R_{nr}$ defined in \cite[\S 5]{Kahn:2015qf}.  The author would like to thank the
anonymous referee for bringing this crucial fact to our attention.

\subsection{Kahn-Sujatha birational motives}  \label{subsec.KSbirmot}

Let $DM^{\circ}$ be the Kahn-Sujatha triangulated category of birational motives defined in
\cite[Def. 3.2.1]{Kahn:2015qf}, $\nu_{\leq 0}:\DMeff \rightarrow DM^{\circ}$ the
corresponding localization functor (see the diagram in \cite[p. 20]{Kahn:2015qf}), and
$i^{\circ}:DM^{\circ} \rightarrow \DMeff$ its right adjoint \cite[Thm. 3.3.5]{Kahn:2015qf}.

\begin{rmk}  \label{rmk.counitiso}
Since the right adjoint $i^{\circ}$ is fully faithful 
\cite[Thm. 3.3.5]{Kahn:2015qf}, we conclude that the counit $\nu_{\leq 0}\circ i^{\circ}
\rightarrow id$ of the adjunction $(\nu_{\leq 0}, i^{\circ})$ is a natural isomorphism.
\end{rmk}

\subsubsection{}  \label{subsubsec.KSnot} 

We will write $\DMeff \cap \northogonal{1}$ for the full triangulated subcategory of $\DMeff$
consisting of objects $E$ which belong to $\DMeff$ and to $\northogonal{1}$
\eqref{subsubsec.neffDM}.  Let $\iota$ denote the inclusion $\DMeff \cap \northogonal{1}
\rightarrow \DMeff$.

\begin{lem}  \label{lem.KScomp1}
With the notation of \ref{subsec.KSbirmot} and \ref{subsubsec.KSnot}.
\begin{enumerate}
\item \label{lem.KScomp1.a}  The composition $\nu_{\leq 0} \circ \iota : \DMeff \cap
\northogonal{1}\rightarrow DM^{\circ}$ is an equivalence of categories.  

\item \label{lem.KScomp1.b}  For every $E\in DM^{\circ}$, $i^{\circ}E \in 
\DMeff \cap \northogonal{1}$.  Moreover, the fully faithful functor $i^\circ: DM^\circ
\rightarrow \DMeff$ induces an equivalence $DM^\circ \cong i^\circ (DM^\circ)
\cong \DMeff \cap \northogonal{1}$ which is a quasi-inverse to $\nu _{\leq 0}
\circ \iota$.

\item \label{lem.KScomp1.c}  The natural transformation $\iota \rightarrow i^{\circ} \circ
(\nu_{\leq 0} \circ \iota)$ deduced from the unit of the adjunction $(\nu _{\leq 0}, i^\circ)$
is an isomorphism.
\end{enumerate}
\end{lem}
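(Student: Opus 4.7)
The plan is to prove the three assertions in the order first-part-of-(b), (c), completion-of-(b), (a), each step exploiting the adjunction $(\nu_{\leq 0}, i^\circ)$ together with the definition $\northogonal{1} = Loc(\generators^{\mathrm{eff}}(1))^\perp$ from \ref{subsubsec.neffDM}.

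For the first part of (b), I would apply Remark \ref{rmk.orthchk}: to see $i^\circ E \in \northogonal{1}$ it suffices to verify $\Hom_{DM}(M(X)(p)[n], i^\circ E) = 0$ for every $X \in Sm_k$, every $p \geq 1$ and every $n \in \mathbb Z$. Adjointness of $(\nu_{\leq 0}, i^\circ)$ rewrites this group as $\Hom_{DM^\circ}(\nu_{\leq 0}(M(X)(p)[n]), E)$, and since the Kahn-Sujatha localization kills the subcategory $\DMeff(1) = Loc(\generators^{\mathrm{eff}}(1))$, this source vanishes.

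For (c), fix $F \in \DMeff \cap \northogonal{1}$ and let $C$ be the cone in $\DMeff$ of the unit $\eta_F : F \to i^\circ \nu_{\leq 0} F$. Applying $\nu_{\leq 0}$ and invoking the triangle identities together with Remark \ref{rmk.counitiso} yields $\nu_{\leq 0}(\eta_F)$ as the inverse of the counit isomorphism, hence $\nu_{\leq 0} C \cong 0$, which by the Kahn-Sujatha construction places $C$ inside $\DMeff(1) = Loc(\generators^{\mathrm{eff}}(1))$. Meanwhile both $F$ (by hypothesis) and $i^\circ \nu_{\leq 0} F$ (by the first part of (b)) lie in $\northogonal{1}$, so the triangulated subcategory property forces $C \in \northogonal{1}$ as well. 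Thus $C$ belongs simultaneously to $Loc(\generators^{\mathrm{eff}}(1))$ and its right orthogonal $Loc(\generators^{\mathrm{eff}}(1))^\perp$, which forces $\Hom_{DM}(C, C) = 0$ and hence $C \cong 0$, so $\eta_F$ is an isomorphism.

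The remainder is formal. The isomorphism $F \cong i^\circ \nu_{\leq 0} F$ just obtained in (c), combined with the first part of (b), shows $i^\circ(DM^\circ) = \DMeff \cap \northogonal{1}$; since $i^\circ$ is fully faithful (Remark \ref{rmk.counitiso}), this is the equivalence asserted in (b), with $\nu_{\leq 0} \circ \iota$ as a quasi-inverse (Remark \ref{rmk.counitiso} on one side, (c) on the other). Part (a) is then just (b) rephrased: essential surjectivity of $\nu_{\leq 0} \circ \iota$ follows from $F \cong (\nu_{\leq 0} \circ \iota)(i^\circ F)$ for $F \in DM^\circ$, and full faithfulness from the chain $\Hom(\nu_{\leq 0}\iota E, \nu_{\leq 0}\iota F) \cong \Hom(\iota E, i^\circ \nu_{\leq 0} \iota F) \cong \Hom(\iota E, \iota F)$, where the first isomorphism is adjunction and the second is (c). The only non-formal ingredient, and hence the main obstacle, is the structural fact that the kernel of $\nu_{\leq 0}$ on $\DMeff$ coincides with $\DMeff(1)$; this must be extracted from the construction of $DM^\circ$ in \cite{Kahn:2015qf}, after which every other step is routine adjoint-functor manipulation together with the orthogonality formalism of \S\ref{sec.orth}.
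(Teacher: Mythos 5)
Your argument is correct and rests on exactly the same structural input as the paper's proof, namely that $DM^{\circ}$ is the Bousfield localization of $\DMeff$ at $\DMeff(1)=Loc(\generators^{\mathrm{eff}}(1))$ with $i^{\circ}$ its fully faithful right adjoint; the difference is one of packaging and order. The paper disposes of \eqref{lem.KScomp1.a} in one line by citing Neeman's localization theorem \cite[Thm.~9.1.16]{MR1812507}, obtains $i^{\circ}(DM^{\circ})\subseteq\northogonal{1}$ from \cite[Lem.~9.1.2]{MR1812507}, and then deduces \eqref{lem.KScomp1.c} formally from \eqref{lem.KScomp1.b}; you instead re-derive those two cited results by hand --- the generator-by-generator orthogonality check via adjunction for the first half of \eqref{lem.KScomp1.b}, and the cone-plus-triangle-identity argument for \eqref{lem.KScomp1.c} --- and only afterwards assemble \eqref{lem.KScomp1.b} and \eqref{lem.KScomp1.a}, so your logical order is the reverse of the paper's. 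What your self-contained route buys is independence from Neeman's general theorems at the cost of spelling out their proofs in this special case; both routes hinge on the one non-formal fact you correctly flag, that the kernel of $\nu_{\leq 0}$ is precisely $\DMeff(1)$ (for the cone argument one also needs that this localizing subcategory is thick, so that the kernel is not strictly larger --- standard via the Eilenberg swindle since it is closed under coproducts). Two harmless glosses: your adjunction step for $\Hom_{DM}(M(X)(p)[n],i^{\circ}E)$ tacitly identifies $\Hom$ in $DM$ with $\Hom$ in $\DMeff$ for effective objects, i.e.\ invokes Voevodsky's cancellation theorem (\ref{subsubsec.cancel}), and the cone of $\eta_{F}$ must be formed in $\DMeff$ viewed as a full triangulated subcategory of $DM$ so that membership in $\northogonal{1}$ makes sense; the paper uses both points silently as well.
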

\begin{proof}
\eqref{lem.KScomp1.a}:  This follows from the definition of $DM^{\circ}$ 
\cite[Def. 3.2.1]{Kahn:2015qf} and \cite[Thm. 9.1.16]{MR1812507}.

\eqref{lem.KScomp1.b}:  Since $i^{\circ}: DM^{\circ}\rightarrow \DMeff$
is fully faithful and a right adjoint of $\nu _{\leq 0}$, it suffices
to show that $i^{\circ}E$ is in $\northogonal{1}$, . 
Now, we observe that by construction $DM^{\circ}$ is the localization for
the pair $\DMeff (1)\subseteq \DMeff$ \cite[Def. 9.1.1]{MR1812507}.  The result then follows
from \cite[Lem. 9.1.2]{MR1812507}.

\eqref{lem.KScomp1.c}:  Since $i^\circ(DM^\circ)=\DMeff \cap \northogonal{1}$,
it suffices to show that $i^\circ \rightarrow i^{\circ} \circ (\nu_{\leq 0} \circ i^\circ)$
is an isomorphism which is clear since $i^\circ$ is fully faithful \eqref{rmk.counitiso}.
\end{proof}

 \begin{defi}  \label{def.KSunrcoh}
The Kahn-Sujatha unramified cohomology functor $R_{nr}:\DMeff \rightarrow DM^{\circ}$ is
the right adjoint of $i^{\circ}:DM^{\circ} \rightarrow \DMeff$ 
\cite[\S 5]{Kahn:2015qf}.
\end{defi}

Recall that $i_{0}$ is the inclusion $\DMeff \rightarrow DM$, which admits a right adjoint
$r_{0}$ and that $f_{0}=i_{0}\circ r_{0}$ \eqref{subsec.ssfil}.

\begin{prop}
The functor $i^{\circ}\circ R_{nr}:\DMeff \rightarrow \DMeff$ is isomorphic to 
$(r_{0}\circ bc_{\leq 0})\circ i_{0}:\DMeff \rightarrow \DMeff$, where $bc_{\leq 0}$ is
the birational cover defined in \ref{def.birat.cover}.
\end{prop}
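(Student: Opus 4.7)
The plan is to show that the composite $\nu_{\leq 0}\circ \iota \circ r_{0}\circ bc_{\leq 0}\circ i_{0}:\DMeff \to DM^{\circ}$ is a right adjoint of $i^{\circ}$. By uniqueness of adjoints it must then coincide with $R_{nr}$, and the stated isomorphism follows upon applying $i^{\circ}$ and invoking Lemma \ref{lem.KScomp1}\eqref{lem.KScomp1.c}.

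For $E\in \DMeff$, write $S(E):=r_{0}(bc_{\leq 0}(i_{0}E))\in \DMeff$. Since $i_{0}S(E)=f_{0}(bc_{\leq 0}(i_{0}E))$, Lemma \ref{lem.comp.orteff}\eqref{lem.comp.orteff.a} with $p=q=0$ places $S(E)$ in $\DMeff \cap \northogonal{1}$, so I may set $T(E):=\nu_{\leq 0}(\iota(S(E)))\in DM^{\circ}$ using the equivalence in Lemma \ref{lem.KScomp1}\eqref{lem.KScomp1.a}. To verify that $T$ is right adjoint to $i^{\circ}$, I would fix $F\in DM^{\circ}$ and $E\in \DMeff$, note that $i^{\circ}F\in \DMeff \cap \northogonal{1}$ by Lemma \ref{lem.KScomp1}\eqref{lem.KScomp1.b} and $F\cong \nu_{\leq 0}(\iota(i^{\circ}F))$ by Remark \ref{rmk.counitiso}, and chain together the equivalence of Lemma \ref{lem.KScomp1}\eqref{lem.KScomp1.a}, the full embeddings $\DMeff \cap \northogonal{1}\subseteq \DMeff \subseteq DM$, the adjunction $(i_{0},r_{0})$ of \ref{subsec.ssfil}, and finally Proposition \ref{counit-properties} at $q=0$ (applicable since $i^{\circ}F\in \northogonal{1}$), to obtain natural isomorphisms
\begin{align*}
\Hom_{DM^{\circ}}(F, T(E)) &\cong \Hom_{\DMeff \cap \northogonal{1}}(i^{\circ}F,\, S(E)) \\
 &\cong \Hom_{\DMeff}(i^{\circ}F,\, r_{0}(bc_{\leq 0}(i_{0}E))) \\
 &\cong \Hom_{DM}(i^{\circ}F,\, bc_{\leq 0}(i_{0}E)) \\
 &\cong \Hom_{DM}(i^{\circ}F,\, i_{0}E) \cong \Hom_{\DMeff}(i^{\circ}F, E).
\end{align*}
This is precisely the defining adjunction of $R_{nr}$ (Definition \ref{def.KSunrcoh}), so uniqueness of right adjoints produces a canonical natural isomorphism $T\cong R_{nr}$.

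Applying $i^{\circ}$ and invoking Lemma \ref{lem.KScomp1}\eqref{lem.KScomp1.c} to identify $i^{\circ}\nu_{\leq 0}(\iota(S(E)))$ with $\iota(S(E))$, I conclude
\[ i^{\circ}R_{nr}(E) \;\cong\; i^{\circ}T(E) \;=\; i^{\circ}\nu_{\leq 0}(\iota(S(E))) \;\cong\; \iota(S(E)) \;=\; r_{0}(bc_{\leq 0}(i_{0}E)) \]
naturally in $E$, which is the desired statement. The main difficulty is simply keeping track of the four categories $DM$, $\DMeff$, $DM^{\circ}$ and $\DMeff \cap \northogonal{1}$ along with their various inclusions, adjoints, and counit identifications; once this bookkeeping is in place, the adjunction calculation above is a routine concatenation of universal properties already established in the paper. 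Conceptually, both functors produce the essentially unique object of $\DMeff \cap \northogonal{1}$ corepresenting $\Hom_{DM}(-,\, i_{0}E)$ on that subcategory.
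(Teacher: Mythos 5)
Your proposal is correct, and it is essentially the mirror image of the paper's argument: you prove that $T=\nu_{\leq 0}\circ\iota\circ r_{0}\circ bc_{\leq 0}\circ i_{0}$ is right adjoint to $i^{\circ}$ and invoke uniqueness of right adjoints to identify it with $R_{nr}$ (Definition \ref{def.KSunrcoh}), whereas the paper goes the other way and checks that the counit map $i_{0}\circ i^{\circ}\circ R_{nr}\rightarrow i_{0}$ satisfies the universal property of Lemma \ref{lem.comp.orteff}\eqref{lem.comp.orteff.b}, which characterizes $f_{0}\circ bc_{\leq 0}\circ i_{0}$ up to unique isomorphism. The two verifications consist of the same chain of Hom-isomorphisms (the full embeddings, the adjunctions $(i_{0},r_{0})$ and $(\nu_{\leq 0},i^{\circ})$, Proposition \ref{counit-properties} at $q=0$, and Lemma \ref{lem.KScomp1}) read in opposite directions, and both ultimately rest on the observation you state at the end: each functor lands in $\DMeff\cap\northogonal{1}$ and corepresents $\Hom_{DM}(-,i_{0}E)$ there. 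Your route requires the extra bookkeeping step of descending to $DM^{\circ}$ and coming back via Lemma \ref{lem.KScomp1}\eqref{lem.KScomp1.c}, which the paper avoids by never leaving $\DMeff$; on the other hand, your version yields the slightly stronger statement that $R_{nr}$ itself (not just $i^{\circ}\circ R_{nr}$) is computed by the birational cover. Both arguments are complete and correct.
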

\begin{proof}
By \ref{lem.KScomp1}\eqref{lem.KScomp1.b}, $(i^{\circ}\circ R_{nr})E\in \DMeff \cap
\northogonal{1}$ for every $E\in \DMeff$.  Hence, it suffices to show that the
natural transformation $i_0 \circ i^\circ \circ R_{nr}\rightarrow i_0$ deduced
from the counit of the adjunction $(i^{\circ}, R_{nr})$ satisfies the
universal property of \ref{lem.comp.orteff}\eqref{lem.comp.orteff.b}.

Let $E'\in \DMeff \cap \northogonal{1}$.  Then:
\[ \Hom _{DM}(i_0 E', i_0\circ i^{\circ}\circ R_{nr}E) \cong
\Hom _{\DMeff}(E', (i^{\circ}\circ R_{nr})E)
\] 
and by adjointness:
\[ \Hom _{\DMeff}(E', (i^{\circ}\circ R_{nr})E)\cong \Hom _{DM^{\circ}}(\nu _{\leq 0}E', R_{nr}E)
\cong \Hom _{\DMeff}(i^{\circ}\nu _{\leq 0}E', E)
\]
On the other hand  
$\Hom _{\DMeff}(i^{\circ}\nu _{\leq 0}E', E)\cong \Hom _{\DMeff}(E',E)$ by
\ref{lem.KScomp1}\eqref{lem.KScomp1.c}.  Hence, we conclude that
$\Hom _{DM}(i_0 E', i_0\circ i^{\circ}\circ R_{nr}E) \cong \Hom _{DM}(i_0 E',i_0 E)$.
This finishes the proof.
\end{proof}

\section{The Birational Tower for Motivic Cohomology}  \label{sec.biratHZ}

We will study the  birational tower \eqref{F.birtow} for the motive of a point $\hr$ with
coefficients in a commutative ring $R$.  Our goal is to show that the tower induces a
 finite filtration on the Chow groups, and that it satisfies several of the properties
of the still conjectural Bloch-Beilinson-Murre filtration.
						
\subsection{Basic properties}

\begin{lem}  \label{orth.HZ}
$\hr$ belongs to $\northogonal{1}$ and to $\DMeff$ \eqref{subsubsec.neffDM}.
\end{lem}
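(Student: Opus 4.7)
The plan is to verify the two memberships $\hr \in \DMeff$ and $\hr \in \northogonal{1}$ separately. Both reduce to essentially formal manipulations once the right vanishing theorem is in hand.

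For $\hr \in \DMeff$, I would observe that the motive of a point $\sphere = M(\spec{k})$ lies in $\generators^{\mathrm{eff}}$ (taking $X = \spec{k}$ and $p = 0$ in \eqref{eq.DMeffgens}), so $\sphere \in \DMeff = Loc(\generators^{\mathrm{eff}})$ by \eqref{subsubsec.cancel}. Since $\DMeff$ is a localizing triangulated subcategory of $DM$, it is closed under arbitrary coproducts and distinguished triangles; $\sphere_R = \sphere \otimes R$ is built from $\sphere$ via a Moore-style construction applied to a free $\mathbb Z$-resolution of $R$, and hence stays inside $\DMeff$.

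For $\hr \in \northogonal{1}$, I would apply Remark~\ref{rmk.orthchk} to reduce the problem to verifying
\[ \Hom_{DM}(M(X)(n)[p],\, \hr) = 0 \]
for every $X \in Sm_k$, every $n \geq 1$, and every $p \in \mathbb Z$. Using that the Tate twist $(-)(-n): DM \to DM$ is an equivalence of categories inverse to $(-)(n)$, this Hom-group is isomorphic to $\Hom_{DM}(M(X)[p], \hr(-n))$, i.e.\ to the motivic cohomology group $H^{-p,-n}(X, R)$ of the smooth $k$-scheme $X$ in weight $-n \leq -1$. This vanishes by Voevodsky's classical vanishing theorem: motivic cohomology of smooth $k$-schemes is concentrated in non-negative weight. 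The extension from integral to $R$-coefficients is handled by the same Moore-style resolution of $R$ used above, which expresses $\hr(-n)$ as an iterated coproduct/cone of copies of $\sphere(-n)$.

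The only substantive ingredient is the negative-weight vanishing of motivic cohomology for smooth $k$-schemes; everything else is bookkeeping on the defining generators of $\DMeff$ and $\northogonal{1}$, together with the cancellation-theorem equivalence of Tate twists. I do not anticipate any serious obstacle.
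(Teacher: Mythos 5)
Your first part and your reduction of the second part are fine: $\hr\in\DMeff$ because $\DMeff=Loc(\generators^{\mathrm{eff}})$ is closed under coproducts and cones, and by Remark~\ref{rmk.orthchk} the membership $\hr\in\northogonal{1}$ reduces to the vanishing of $\Hom_{DM}(M(X)(n)[p],\hr)$ for all $X\in Sm_k$, $n\geq 1$, $p\in\mathbb Z$; this is exactly how the paper begins. The problem is your last step. After twisting you identify this group with $H^{-p,-n}(X,R)$ and invoke ``Voevodsky's classical vanishing theorem'' that motivic cohomology is concentrated in non-negative weights. But in this paper (and in $DM$ generally) motivic cohomology in weight $-n<0$ is \emph{defined} as $\Hom_{DM}(M(X),\hr(-n)[\ast])$; the complexes $R(q)$ of Mazza--Voevodsky--Weibel exist only for $q\geq 0$, and $\hr(-n)$ is a genuinely non-effective object with no a priori computation. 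So the ``vanishing theorem'' you cite is, under the paper's definitions, literally the statement of the lemma, and as phrased your argument is circular: you have reduced the claim to itself and then asserted it.

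The paper supplies the missing content as follows. Since $M(X)(n)[p]$ and $\hr$ are both effective, Voevodsky's cancellation theorem (full faithfulness of $\Sigma^{\infty}$, see \ref{subsubsec.cancel}) reduces the computation to $\Hom_{\DMeff}(M(X)(n)[p],\hr)$. The Gysin triangle $M(X\times(\mathbb A^n\setminus\{0\}))\to M(X\times\mathbb A^n)\to M(X)(n)[2n]$ then reduces the desired vanishing to the assertion that the open immersion $X\times(\mathbb A^n\setminus\{0\})\hookrightarrow X\times\mathbb A^n$ induces an isomorphism on weight-zero motivic cohomology, which follows from the computation $R(0)\simeq R$ (weight zero sees only connected components). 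If you replace the appeal to the ``classical vanishing theorem'' by this cancellation-plus-Gysin argument, or by a precise reference establishing the orthogonality of $R(0)$ to $\DMeff(1)$ (e.g.\ the birational-motive literature of Kahn--Sujatha), your proof is complete; as written, the one substantive step is asserted rather than proved.
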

\begin{proof}
It follows directly from the definition of $\DMeff$ \eqref{subsubsec.neffDM} that
$\hr$ belongs to $\DMeff$.  It only remains to show that $\hr \in \northogonal{1}$.
Combining \ref{subsubsec.neffDM} and \ref{rmk.orthchk}, we conclude that it
suffices to see that for every $X\in Sm_{k}$ and every $p$, $q\in \mathbb Z$
with $p\geq 1$ \eqref{eq.DMeffgenstw}: $\Hom _{DM}(M(X)(p)[q],\hr)=0$.

We observe that $M(X)(p)[q]$ and $\hr$ are in $\DMeff$, thus 
by Voevodsky's cancellation theorem \cite{MR2804268} (see \ref{subsubsec.cancel})
it suffices to show that:
\[ \Hom _{\DMeff}(M(X)(p)[q],\hr)=0.
\]
Let $U=\mathbb A ^{p}
\backslash \{0\}$ and consider the Gysin distinguished triangle in $\DMeff$
\cite[15.15]{MR2242284}: $M(X\times U)\rightarrow M(X\times \mathbb A ^{p})
\rightarrow M(X)(p)[2p]$.  Hence, it suffices to show that the map induced by the open
immersion $X\times U\rightarrow X\times \mathbb A^{p}$:
$\Hom _{\DMeff}(M(X\times \mathbb A ^{p})[q],\hr)\rightarrow \Hom _{\DMeff}
(M(X\times U)[q],\hr)$ is an isomorphism for every $q\in \mathbb Z$.  But this follows
directly from the computation of motivic cohomology in weight zero 
\cite[4.2]{MR2242284}.
\end{proof}

\begin{prop}  \label{prop.hzgeq0birat}
Let $n\geq 0$ be an arbitrary integer.  Then the natural map (see \ref{F.birtow}),
$	\theta _{n}^{\hr}: bc_{\leq n}(\hr)\rightarrow \hr$ is an isomorphism in $DM$.
\end{prop}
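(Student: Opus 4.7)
The plan is to deduce the statement directly from Corollary \ref{cor.comp.tool1} together with Lemma \ref{orth.HZ} and the monotonicity of the birational tower \eqref{eq.birtower}.

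First, I would recall that by Corollary \ref{cor.comp.tool1}, for any object $E\in DM$ the counit $\theta_n^E:bc_{\leq n}E\rightarrow E$ is an isomorphism in $DM$ if and only if $E\in \northogonal{n+1}$. Therefore, it suffices to verify that $\hr \in \northogonal{n+1}$ for every $n\geq 0$.

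Next, I would invoke Lemma \ref{orth.HZ}, which asserts that $\hr \in \northogonal{1}$. The birational tower \eqref{eq.birtower} yields the chain of inclusions
\begin{equation*}
\northogonal{1} \subseteq \northogonal{2} \subseteq \cdots \subseteq \northogonal{n+1} \subseteq \cdots
\end{equation*}
since $n\geq 0$ implies $n+1\geq 1$. Hence $\hr$ belongs to $\northogonal{n+1}$ for all $n\geq 0$, and the conclusion follows.

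There is essentially no obstacle here: the content of the proposition lies entirely in Lemma \ref{orth.HZ} (whose proof invokes Voevodsky's cancellation theorem and the computation of weight-zero motivic cohomology), while the present statement is a formal consequence combining \ref{orth.HZ}, the monotonicity of the tower, and the universal characterization from \ref{cor.comp.tool1}.
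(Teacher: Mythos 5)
Your proposal is correct and follows exactly the paper's own argument: reduce via Corollary \ref{cor.comp.tool1} to showing $\hr\in\northogonal{n+1}$, then combine Lemma \ref{orth.HZ} (which gives $\hr\in\northogonal{1}$) with the inclusions of the birational tower \eqref{eq.birtower}. You also correctly identify that the real content lives in Lemma \ref{orth.HZ}, so nothing further is needed.
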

\begin{proof}
Combining \ref{cor.comp.tool1} and \ref{eq.birtower}, it suffices to show that $\hr$ is in 
$\northogonal{1}$.  This follows from \ref{orth.HZ}.
\end{proof}

Hence, we conclude that the birational tower \eqref{F.birtow} for the motive of a point
with $R$ coefficients is as follows:

\begin{align}	\label{motbirtower}
\xymatrix@C=1.5pc{\cdots \ar[r]
		& bc_{\leq -3}(\hr) \ar[drr]|{\theta _{-3}^{\hr}} \ar[r]
		& bc_{\leq -2}(\hr) \ar[r] \ar[dr]|{\theta _{-2}^{\hr}}& bc_{\leq -1}(\hr) 
		\ar[d]|{\theta _{-1}^{\hr}}  \\						
		&&& \hr}
\end{align}

\subsection{The filtration on the motivic cohomology of a scheme}

Let $X\in Sm_{k}$, and $p$, $q\in \mathbb Z$.  We will write $H^{p,q}(X,R)$ for 
$\Hom _{DM}(M(X), \hr (q)[p])$, i.e. for the motivic cohomology of $X$ with coefficients in 
$R$ of degree $p$ and weight $q$.  By adjointness, $H^{p,q}(X,R)\cong \Hom _{DM}
(M(X)(-q)[-p],\hr)$.
Therefore, the abutment of
the spectral sequence \eqref{thm.birspecseq} for the tower \eqref{motbirtower} evaluated in 
$M(X)(-q)[-p]$ induces a filtration on $H^{p,q}(X,R)$.
Namely:

\begin{defi}  \label{thm.filt.motcohY}
Let $p$, $q$ be arbitrary integers, and let $X$ be in $Sm_{k}$.  Consider the
decreasing filtration $F^{\bullet}$ on $H^{p,q}(X,R)$ where 
$F^n H^{p,q}(X,R)$ is given by
the image of $\theta _{-n\ast}^{\hr}$ (see \ref{motbirtower}), $n\geq 0$:
\[ \xymatrix{\Hom _{DM}(M(X)(-q)[-p], bc_{\leq -n}\hr)
\ar[d]^-{\theta _{-n\ast}^{\hr}}\\
\Hom _{DM}(M(X)(-q)[-p], \hr)=H^{p,q}(X,R).} \]
By construction, the filtration $F^{\bullet}$ is functorial in $X$ with respect to
morphisms in $DM$.
\end{defi}

\begin{rmk}  \label{rmk.funcfil}
The existence of the functor $Chow^{\mathrm{eff}}(k)\rightarrow DM$ 
\cite[Prop. 2.1.4, Thm. 3.2.6]{MR1764202},
where $Chow^{\mathrm{eff}}(k)$ is the category of effective Chow motives over $k$;
implies that the filtration $F^\bullet$ constructed in \ref{thm.filt.motcohY}
is functorial with respect to Chow correspondences. 
\end{rmk}

\subsubsection{Finiteness of the filtration}

Our goal is to show that for $X$ in $Sm_{k}$, the filtration $F^{\bullet}$ on $H^{p,q}(X,R)$
defined in \ref{thm.filt.motcohY} is concentrated in the range $0\leq n\leq q$.

\begin{thm}  \label{thm.filt.motcoh.finite}
Let $p$, $q$ be arbitrary integers, and let $X$ be in $Sm_{k}$.  Then the decreasing
filtration $F^{\bullet}$ on $H^{p,q}(X,R)$ constructed in \ref{thm.filt.motcohY} satisfies the
following properties:
\begin{enumerate}
	\item \label{thm.filt.motcoh.finite.a}  $F^{0}H^{p,q}(X,R)=H^{p,q}(X,R)$,
	\item \label{thm.filt.motcoh.finite.b}  $F^{q+1}H^{p,q}(X,R)=0$.
\end{enumerate}
\end{thm}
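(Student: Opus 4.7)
My plan is to verify the two parts of the theorem using Proposition \ref{prop.hzgeq0birat} for (a) and the generator/orthogonality description of $\northogonal{-q}$ for (b). Both parts should follow essentially directly from the definitions once the right characterizations are identified; no deep geometric input is required.

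For part \eqref{thm.filt.motcoh.finite.a}, I would unwind the definition: $F^{0}H^{p,q}(X,R)$ is the image of $\theta_{0\ast}^{\hr}: \Hom_{DM}(M(X)(-q)[-p], bc_{\leq 0}\hr) \to \Hom_{DM}(M(X)(-q)[-p], \hr)$. Since $0 \geq 0$, Proposition \ref{prop.hzgeq0birat} says that $\theta_{0}^{\hr}: bc_{\leq 0}\hr \rightarrow \hr$ is an isomorphism in $DM$, so the induced map on $\Hom$ groups is an isomorphism, and in particular surjective. Therefore the image is all of $H^{p,q}(X,R)$.

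For part \eqref{thm.filt.motcoh.finite.b}, the strategy is to show that the source of the map defining $F^{q+1}H^{p,q}(X,R)$ is already zero. By \ref{def.birat.cover} and \ref{prop.adj-ort.b}, $bc_{\leq -(q+1)}\hr$ lies in $\northogonal{-q} = Loc(\generators^{\mathrm{eff}}(-q))^{\perp}$. I would then invoke \ref{rmk.orthchk} on the object $M(X)(-q) \in \generators^{\mathrm{eff}}(-q)$ (this is where the indexing works out: the exponent is exactly $-q \geq -q$) with the shift $[-p]$. The conclusion is
\[
\Hom_{DM}\bigl(M(X)(-q)[-p],\, bc_{\leq -(q+1)}\hr\bigr) = 0,
\]
whence the image $F^{q+1}H^{p,q}(X,R)$ vanishes.

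There is essentially no obstacle here: the finiteness is built into the construction via the orthogonality relation, which is precisely the point emphasized in the introduction (finiteness is automatic in this approach). The only thing to double-check is the index bookkeeping: the birational cover $bc_{\leq m}$ lands in $\northogonal{m+1}$, and we want this orthogonal piece to annihilate $M(X)(-q)[-p]$; setting $m = -(q+1)$ gives $m+1 = -q$, which matches the Tate twist $-q$ of $M(X)$, confirming the sharp cutoff $F^{q+1} = 0$.
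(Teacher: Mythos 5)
Your proposal is correct and follows essentially the same route as the paper: part (a) via Proposition \ref{prop.hzgeq0birat}, and part (b) by noting $M(X)(-q)[-p]\in \DMeff(-q)$ while $bc_{\leq -q-1}\hr\in\northogonal{-q}$, so the relevant $\Hom$-group vanishes. The only cosmetic point is that the vanishing follows directly from Definition \ref{def.orthn} (since $M(X)(-q)[-p]$ lies in $Loc(\generators^{\mathrm{eff}}(-q))$) rather than from \ref{rmk.orthchk}, which goes in the other direction.
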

\begin{proof}
By \ref{prop.hzgeq0birat} we deduce that $\theta ^{\hr}_{0}$ is an isomorphism in 
$DM$.  This proves the first claim.  For the second claim, we observe that 
$M(X)(-q)[-p]$ is in $\DMeff (-q)$ (see \ref{eq.DMeffgenstw} and \ref{subsubsec.neffDM}).
Thus, it suffices to show that $bc_{\leq -q-1}\hr$ belongs to $\northogonal{-q}$ (see
\ref{subsubsec.neffDM}).
This follows from \ref{prop.adj-ort.b} and \ref{def.birat.cover}.
\end{proof}

\subsection{The components of the filtration}

We will describe some properties that an element in $H^{p,q}(X,R)$
needs to satisfy in order to be in the $n$-component of the filtration $F^{\bullet}$
\eqref{thm.filt.motcohY}.  First, we fix some notation.

\subsubsection{}  \label{subsubsec.notpre}

Let $X\in Sm _k$ and $\alpha \in H^{p,q}(X,R)$ with $q\geq 0$.
Set $A=M(X)(-q)[-p]\in DM$.  Then $\alpha$ induces a map 
$\alpha(-q)[-p]:A\rightarrow \sphere _R$.  Let $n>0$ and set $n'=-n+1$.
Recall the $n'$-effective
cover $\epsilon _{n'}^{A}:f_{n'}(A)\rightarrow A$
\eqref{rmk.univeffcar}.

\begin{prop}  \label{prop.crit1}
With the notation of \ref{subsubsec.notpre}, $\alpha \in F^{n}H^{p,q}(X,R)$
(see \ref{thm.filt.motcohY}) if and only if $\alpha (-q)[-p]\circ \epsilon _{n'}^{A}=0$.
\end{prop}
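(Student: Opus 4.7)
The plan is to unfold \ref{thm.filt.motcohY} and exploit the orthogonality $\Hom_{DM}(\DMeff(n'),\northogonal{n'})=0$. Recall $n' = -n+1$, so $bc_{\leq -n}\hr = bc_{\leq n'-1}\hr = j_{n'}\circ p_{n'}(\hr)$ lies in $\northogonal{n'}$ by \ref{prop.adj-ort.b} and \ref{def.birat.cover}. Moreover $f_{n'}(A)$ lies in $\DMeff(n')$ by construction \eqref{subsec.ssfil}. Finally, the functor $f_{n'}$ sits in a natural distinguished triangle produced by \ref{thm.gentower} applied to the inclusion $\DMeff(n')\subseteq DM$:
\[ \xymatrix{f_{n'}(A) \ar[r]^-{\epsilon_{n'}^{A}} & A \ar[r] & s_{n'}(A) \ar[r] & f_{n'}(A)[1],} \]
with $s_{n'}(A)\in \DMeff(n')^{\perp}=\northogonal{n'}$.

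For the $(\Rightarrow)$ direction, assume $\alpha\in F^{n}H^{p,q}(X,R)$, so that $\alpha(-q)[-p] = \theta_{-n}^{\hr}\circ \beta$ for some $\beta: A\to bc_{\leq -n}\hr$. Then $\alpha(-q)[-p]\circ \epsilon_{n'}^{A} = \theta_{-n}^{\hr}\circ(\beta\circ \epsilon_{n'}^{A})$. But $\beta\circ \epsilon_{n'}^{A}$ is a map from $f_{n'}(A)\in \DMeff(n')$ to $bc_{\leq -n}\hr \in \northogonal{n'}$, hence vanishes by the defining property of $\northogonal{n'}$ \eqref{def.orthn}.

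For the $(\Leftarrow)$ direction, assume $\alpha(-q)[-p]\circ \epsilon_{n'}^{A}=0$. Applying $\Hom_{DM}(-,\hr)$ to the distinguished triangle above and using exactness, the map $\alpha(-q)[-p]:A\to \hr$ factors as $\alpha(-q)[-p] = \gamma\circ \rho$ for some $\gamma:s_{n'}(A)\to \hr$ and $\rho:A\to s_{n'}(A)$. Since $s_{n'}(A)\in \northogonal{n'}$, the universal property \ref{counit-properties} (with $q$ there equal to $-n$) gives a unique $\tilde\gamma:s_{n'}(A)\to bc_{\leq -n}\hr$ with $\theta_{-n}^{\hr}\circ \tilde\gamma = \gamma$. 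Setting $\beta = \tilde\gamma\circ \rho: A\to bc_{\leq -n}\hr$, we get $\theta_{-n}^{\hr}\circ \beta = \gamma\circ \rho = \alpha(-q)[-p]$, witnessing $\alpha\in F^{n}H^{p,q}(X,R)$.

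There is no real obstacle here; the argument is a packaging of two orthogonality/universality statements coming from \ref{thm.gentower} and \ref{counit-properties} respectively, together with the numerical match $bc_{\leq -n}\in \northogonal{n'}$ versus $f_{n'}\in \DMeff(n')$. The only point to check carefully is the index bookkeeping (that $-n$ and $n'=-n+1$ produce the matching pair of orthogonal subcategories), which is immediate from \eqref{def.birat.cover}.
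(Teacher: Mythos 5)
Your proof is correct and follows essentially the same route as the paper's: the forward direction is the orthogonality $\Hom_{DM}(f_{n'}A,\,bc_{\leq -n}\hr)=0$ coming from $f_{n'}A\in\DMeff(n')$ and $bc_{\leq -n}\hr\in\northogonal{n'}$, and the converse factors $\alpha(-q)[-p]$ through the cone of $\epsilon_{n'}^{A}$ (which lies in $\northogonal{n'}$) and lifts along $\theta_{-n}^{\hr}$ via the universal property of the counit. The index bookkeeping $n'=-n+1$ is handled exactly as in the paper.
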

\begin{proof}
($\Rightarrow$):  By construction \eqref{thm.filt.motcohY}, it suffices to show that:
\[  \Hom _{DM}(f_{-n+1}A, bc_{\leq -n}\sphere _R)=0.
\]  
This follows directly from
\ref{rmk.univcar} and \ref{rmk.univeffcar} (see \ref{subsubsec.neffDM}).

($\Leftarrow$):  Let $s_{<n'}(A)$ be the cone of $\epsilon _{n'}^A$, and consider the
following diagram in $DM$ where the top row is a distinguished triangle:
\[  \xymatrix{f_{n'}(A)[1] & \ar[l] s_{<n'}(A) \ar@{..>}_-{\alpha ''}[d] \ar@{-->}_-{\alpha '}[dr]
& \ar[l] \ar[d]^-{\alpha (-q)[-p]}  A &  \ar[l]_-{\epsilon _{n'}^{A}} f_{n'}(A)\\
& bc_{\leq -n}(\sphere _R) \ar_-{\theta _{-n}^{\sphere _R}}[r]& \sphere _R &}
\]
Hence, $\alpha (-q)[-p]\circ \epsilon _{n'}^{A}=0$ if and only if there exists $\alpha '$
which makes the right triangle commute.
On the other hand, the universal property of $\epsilon _{n'}^A$ \eqref{rmk.univeffcar}
implies that $s_{<n'}A\in DM^{\perp}(-n+1)$ (see \ref{subsubsec.neffDM}).  Thus, by
the universal property of $\theta _{-n}^{\sphere _R}$ 
\eqref{counit-properties}-\eqref{rmk.univcar} we conclude that the existence of $\alpha '$
such that the right triangle commutes is equivalent to the existence of $\alpha ''$
such that the square commutes.

Hence, by construction \eqref{thm.filt.motcohY} we conclude that
$\alpha \in F^{n}H^{p,q}(X,R)$.
\end{proof}

\begin{cor}  \label{cor.crit2}
With the notation of \ref{subsubsec.notpre}.  Then $F^n H^{p,q}(X,R)$ 
(see \ref{thm.filt.motcohY}) is isomorphic to:
\[ \mathrm{Ker}(\Hom_{DM}(M(X)(-q)[-p],\sphere _R)
\stackrel{\epsilon ^{A \; \ast}_{-n+1}}{\longrightarrow} 
\Hom_{DM}(f_{-n+1}(M(X)(-q)[-p]),\sphere _R))
\]
which is also isomorphic to:
\[ \mathrm{Ker}(\Hom_{DM}(M(X),\sphere _R (q)[p])
\stackrel{\epsilon ^{M(X)\; \ast}_{q-n+1}}{\longrightarrow} 
\Hom_{DM}(f_{q-n+1}(M(X)),\sphere _R(q)[p]))
\]
\end{cor}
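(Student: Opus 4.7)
The first isomorphism is essentially a reformulation of \ref{prop.crit1}. Indeed, by definition $\alpha \in H^{p,q}(X,R)$ corresponds to the map $\alpha(-q)[-p]\colon A\to \sphere _R$, and by \ref{prop.crit1} the condition $\alpha\in F^n H^{p,q}(X,R)$ is equivalent to $\alpha(-q)[-p]\circ \epsilon_{-n+1}^A=0$, i.e. to the vanishing of $\epsilon_{-n+1}^{A\;\ast}(\alpha(-q)[-p])$. Thus the assignment $\alpha\mapsto \alpha(-q)[-p]$ sets up the desired isomorphism between $F^n H^{p,q}(X,R)$ and the stated kernel, once we observe that the map $H^{p,q}(X,R)\to \Hom_{DM}(A,\sphere_R)$ itself is an isomorphism via adjointness of the Tate twist and shift equivalences (see the paragraph preceding \ref{thm.filt.motcohY} and the equivalence noted in \ref{subsubsec.neffDM}).

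For the second isomorphism the plan is to transport the first description along the triangulated equivalence $E\mapsto E(-q)[-p]\colon DM\to DM$. Applying Proposition \ref{prop.Tatetwist}\eqref{prop.Tatetwist.b} with $E=M(X)$ and Tate index $-q$, we obtain a natural isomorphism
\[
t_{-q}^{\mathrm{eff}}(M(X))[-p]\colon (f_{q-n+1}M(X))(-q)[-p]\xrightarrow{\;\cong\;} f_{-n+1}(M(X)(-q))[-p]=f_{-n+1}(A),
\]
and the compatibility $\epsilon_{-n+1}^{A}\circ t_{-q}^{\mathrm{eff}}(M(X))[-p]=\epsilon_{q-n+1}^{M(X)}(-q)[-p]$ stated in \ref{prop.Tatetwist}\eqref{prop.Tatetwist.b}. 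Precomposing with these isomorphisms identifies the map
\[
\epsilon_{-n+1}^{A\;\ast}\colon \Hom_{DM}(A,\sphere _R)\longrightarrow \Hom_{DM}(f_{-n+1}(A),\sphere _R)
\]
with
\[
\epsilon_{q-n+1}^{M(X)\;\ast}\colon \Hom_{DM}(M(X),\sphere _R(q)[p])\longrightarrow \Hom_{DM}(f_{q-n+1}(M(X)),\sphere _R(q)[p]),
\]
via the Tate twist/shift bijection $\Hom_{DM}(E,F)\cong \Hom_{DM}(E(-q)[-p],F(-q)[-p])$; taking kernels gives the second isomorphism.

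The whole argument is formal; the only genuine ingredients are Proposition \ref{prop.crit1} for the first isomorphism and the naturality of the Tate twist for the effective cover (Proposition \ref{prop.Tatetwist}\eqref{prop.Tatetwist.b}) for the second. The only point requiring mild care is to verify that the diagram relating $\epsilon_{-n+1}^{A}$ and $\epsilon_{q-n+1}^{M(X)}(-q)[-p]$ commutes under the Tate twist equivalence, which is precisely the content of the compatibility $\epsilon_{q+n}^{E(n)}\circ t_n^{\mathrm{eff}}(E)=\epsilon_q^E(n)$ from \ref{prop.Tatetwist}\eqref{prop.Tatetwist.b}. Once this is in place, passing to kernels yields the corollary.
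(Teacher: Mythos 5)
Your proposal is correct and follows exactly the paper's route: the first identification is a direct restatement of Proposition \ref{prop.crit1}, and the second is obtained by transporting along the Tate twist/shift equivalence using the compatibility in Proposition \ref{prop.Tatetwist}\eqref{prop.Tatetwist.b} together with adjointness. You have simply spelled out the details that the paper's one-line proof leaves implicit.
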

\begin{proof}
The first isomorphism follows directly from \ref{prop.crit1};  the second one
follows by adjointness and \ref{prop.Tatetwist}\eqref{prop.Tatetwist.b}.
\end{proof}

\begin{rmk}  \label{rmk.nobc}
By \ref{cor.crit2} it is possible to construct the filtration $F^\bullet$
\eqref{thm.filt.motcohY} using only the effective covers of the slice filtration. 
\end{rmk}

\subsubsection{The filtration for smooth projective varieties}  \label{Fil.smproj}

We will show that  an algebraic cycle $\alpha \in H^{2q,q}(X,R)\cong CH^q(X)$
is in $F^1 H^{2q,q}(X,R)$ \eqref{thm.filt.motcohY} if and only if $\alpha$ is
numerically equivalent to zero.  When the coefficient ring $R\neq \mathbb Q$,
we will say that $\alpha$ is numerically equivalent to zero if for every field
extension $K/k$ of finite transcendence degree and every cycle
$\beta \in CH_q(X_K)_R$, the intersection multiplicity 
$\deg(\alpha _K\cdot \beta)\in R$ is zero.

Let $\inthomeff$ denote the internal $\Hom$-functor in $\DMeff$.  Notice that it
does not coincide in general with the internal $\Hom$-functor in $DM$ (see the
remark after Cor. 4.3.6 in \cite{MR1764202}).

The following result is based on the work of Kahn-Sujatha 
\cite[Lem. 5.1 and its proof]{Kahn:2015kq}.  The fact
that this approach does in fact compute the first component of the filtration was
brought to my attention by the anonymous referee. 

\begin{thm}  \label{thm.main2}
Let $X$ be a smooth projective $k$-scheme and $q\geq 0$.  
Then via the isomorphism $H^{2q,q}(X,R)\cong CH^q(X)_R$, 
$F^1H^{2q,q}(X,R)$ \eqref{thm.filt.motcohY} gets identified with the $R$-submodule
of cycles numerically equivalent to zero.
\end{thm}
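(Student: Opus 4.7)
The plan is to combine the alternative description of the filtration from Corollary \ref{cor.crit2} with Voevodsky duality for smooth projective varieties, and then adapt the argument of Kahn--Sujatha \cite[Lem.~5.1]{Kahn:2015kq} to express the resulting kernel in terms of numerical equivalence.

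Applying \ref{cor.crit2} with $n=1$ and $p=2q$, I would first identify
\[
F^{1}H^{2q,q}(X,R) \cong \ker\bigl(\Hom_{DM}(M(X),\hr(q)[2q]) \xrightarrow{\epsilon_{q}^{M(X)\,\ast}} \Hom_{DM}(f_{q}M(X),\hr(q)[2q])\bigr),
\]
so the task reduces to showing, via the isomorphism $H^{2q,q}(X,R)\cong CH^{q}(X)_{R}$, that this kernel is exactly the submodule of numerically trivial cycles. Since $X$ is smooth projective of dimension $d$, Voevodsky duality supplies a strong dual $M(X)^{\vee}\cong M(X)(-d)[-2d]$, which lets me translate both sides into cycle-theoretic terms: the source becomes $CH^{q}(X)_{R}$ and, after using \ref{prop.Tatetwist}\eqref{prop.Tatetwist.b}, Voevodsky's cancellation theorem, and the internal $\inthomeff$-functor in $\DMeff$, the target can be rewritten as a group built out of Chow groups of complementary dimension on generic pullbacks of $X$.

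The central step, and the main obstacle, is to establish that $\Hom_{DM}(f_{q}M(X),\hr(q)[2q])$ admits a description as an (inverse) limit indexed by finitely generated field extensions $K/k$ of groups of cycles of dimension $q$ on $X_{K}$, in such a way that the map $\epsilon_{q}^{M(X)\,\ast}$ becomes the assignment
\[
\alpha \;\longmapsto\; \bigl((K,\beta)\mapsto \deg(\alpha_{K}\cdot\beta)\bigr), \qquad \beta\in CH_{q}(X_{K})_{R}.
\]
Here I would closely follow the proof of \cite[Lem.~5.1]{Kahn:2015kq}: use the universal property of the effective cover (\ref{rmk.univeffcar}) to rewrite maps out of $f_{q}M(X)$ as a colimit over effective generators of the form $M(U)(q)$ with $U\subseteq X$ open, then apply duality for each $X_{K}$ (smooth projective remains dualizable after base change) to convert the resulting Hom-groups into intersection pairings. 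The delicate point is that $f_{q}M(X)$ is not itself the motive of a scheme, so this geometric translation requires the cofiltered presentation coming from open subschemes of $X$ and the compatibility of Tate twists with duality supplied by \ref{prop.Tatetwist}.

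Once this identification is in place, the vanishing of $\epsilon_{q}^{M(X)\,\ast}(\alpha)$ becomes equivalent to the vanishing of all intersection degrees $\deg(\alpha_{K}\cdot\beta)$ as $K$ runs over field extensions of finite transcendence degree and $\beta$ over $CH_{q}(X_{K})_{R}$, which is precisely the definition of numerical equivalence given in \ref{Fil.smproj}. This yields the desired identification of $F^{1}H^{2q,q}(X,R)$ with the $R$-submodule of cycles numerically equivalent to zero, completing the proof.
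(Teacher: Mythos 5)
Your overall strategy matches the paper's: reduce via \ref{cor.crit2} to showing that $\alpha$ lies in the kernel of $\epsilon_q^{M(X)\,\ast}$ exactly when it is numerically trivial, then run the Kahn--Sujatha argument of \cite[Lem.~5.1]{Kahn:2015kq} using duality for the smooth projective $X$ and a colimit over open subsets of a smooth model of each finitely generated extension $K/k$. The endpoints are right, but the mechanism you propose for the ``central step'' does not work as stated, and that step is where the content of the proof lives.

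First, the universal property \ref{rmk.univeffcar} computes $\Hom_{DM}(F, f_qM(X))$ for $F\in \DMeff(q)$; it says nothing about maps \emph{out} of $f_qM(X)$, so it cannot be used to present $\Hom_{DM}(f_qM(X),\hr(q)[2q])$ as a (co)limit over generators. The paper instead invokes the identification $f_qM(X)\cong \inthomeff(\sphere(q)[2q],M(X))(q)[2q]$ of \cite[Prop.~1.1]{MR2249535}, which together with adjointness and cancellation converts the condition $\alpha(-q)[-2q]\circ\epsilon_q^{M(X)}=0$ into the vanishing of an induced map $\alpha_\ast\colon\inthomeff(\sphere(q)[2q],M(X))\to\sphere_R$ in $\DMeff$. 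Second, you omit the homotopy $t$-structure argument: by \cite[Prop.~2.3]{Kahn:2015kq} the source of $\alpha_\ast$ lies in $(\DMeff)_{\geq 0}$ while $\sphere_R$ lies in the heart, so $\alpha_\ast$ factors through $h_0$ of its source, a homotopy invariant Nisnevich sheaf with transfers; only then is vanishing detected on sections over finitely generated fields $K/k$, where $\Gamma(K,h_0(\cdots))\cong CH_q(X_K)$. Without this reduction, your assertion that the vanishing of $\epsilon_q^{M(X)\,\ast}(\alpha)$ is equivalent to the vanishing of all intersection degrees $\deg(\alpha_K\cdot\beta)$ is unjustified. Finally, the colimit in the last step runs over the non-empty opens $U$ of a smooth model $Y$ with $k(Y)=K$ (so as to compute sections at the generic point), not over opens of $X$ as you write.
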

\begin{proof}
Let $\alpha \in CH^q(X)_R$.
By \ref{cor.crit2}, it suffices to show that
$\alpha$ is numerically equivalent to zero if and only if the composition:
\[  \xymatrix{f_qM(X)\ar[r]^-{\epsilon _q ^{M(X)}}& 
M(X) \ar[r]^-{\alpha}& \sphere _R (q)[2q]}
\]
is zero in $DM$, or equivalently in $\DMeff$ since $f_qM(X)$, $M(X)$,
$\sphere _R (q)[2q]$ are in $\DMeff$.  By \cite[Prop. 1.1]{MR2249535},
$f_qM(X)\cong \inthomeff (\sphere (q)[2q], M(X))(q)[2q]$, so we are reduced
to show that $\alpha$ is numerically equivalent to zero if and only if
the composition:
\[  \xymatrix{\inthomeff (\sphere (q)[2q], M(X))(q)[2q]\ar[r]^-{\epsilon _q ^{M(X)}}& 
M(X) \ar[r]^-{\alpha}& \sphere _R (q)[2q]}
\]
is zero in $\DMeff$, which is equivalent (by adjointness and Voevodsky's cancellation
theorem \cite{MR2804268}) to show that the map induced by $\alpha$:
\[  \xymatrix{\inthomeff (\sphere (q)[2q], M(X))\ar[r]^-{\alpha _\ast}& 
 \inthomeff(\sphere _R (q)[2q],\sphere _R (q)[2q])\cong \sphere _R}
\]
is zero.  It follows from \cite[Prop. 2.3]{Kahn:2015kq} that
$\inthomeff (\sphere (q)[2q], M(X))$ is in $(\DMeff )_{\geq 0}$ for Voevodsky's
homotopy $t$-structure \cite[p. 11]{MR1764202}, and by \cite[4.2]{MR2242284}
$\sphere _R$ is in the heart of the homotopy $t$-structure.  Hence,
$\alpha _\ast$ descends to a map:
\[  \xymatrix{h_0(\inthomeff (\sphere (q)[2q], M(X)))\ar[r]^-{\bar{\alpha} _\ast}& \sphere _R}
\]
which is zero if and only if $\alpha _\ast$ is zero.  Now,
$\bar{\alpha}_\ast$ is a morphism in the heart of the homotopy $t$-structure,
which is given by homotopy invariant Nisnevich sheaves with transfers.  Thus,
$\bar{\alpha}_\ast$ is zero if and only if the induced map on sections
\cite[11.2]{MR2242284}:
\[  \xymatrix{\Gamma(K,h_0(\inthomeff (\sphere (q)[2q], M(X))))\ar[r]^-{\bar{\alpha} _{K\ast}}
& \Gamma(K,\sphere _R})\cong R
\]
is zero for every field extension $K/k$ of finite transcendence degree.
By \cite[Prop. 2.3]{Kahn:2015kq},
$\Gamma(K,h_0(\inthomeff (\sphere (q)[2q], M(X))))
\cong CH_q(X_K)$; so we are reduced to show that $\bar{\alpha} _{K\ast}$
computes  the intersection multiplicity with $\alpha _K$.

Let $Y\in Sm_k$ with function field $K$.  Then, by adjointness $\bar{\alpha} _{K\ast}$
is the colimit indexed by the non-empty open subsets  $U\subseteq Y$:
\[  \xymatrix{\Hom _{\DMeff} (M(U)\sphere (q)[2q], M(X))\ar[r]^-{\alpha _{U \ast}}& 
 \Hom _{\DMeff}(M(U)(q)[2q],\sphere _R (q)[2q]) \ar[d]_-{\cong}\\
 & CH^0(U)_R\cong R}
\]
Since $X$ is smooth projective, $\Hom _{\DMeff} (M(U)\sphere (q)[2q], M(X))
\cong CH_{d_U+q}(U\times X)_R$  where $d_u$ is the dimension of $U$.
Hence, it suffices to show that $\alpha _{U \ast}$ gets identified with the map
$\beta \mapsto p_{U\ast}(\beta \cdot (U\times \alpha))$, $\beta \in 
CH_{d_U+q}(U\times X)_R$ where $p_U:U\times X \rightarrow U$.
 
Finally, this follows by combining the existence of the functor
$Chow^{\mathrm{eff}}(k)\rightarrow DM$ 
\cite[Prop. 2.1.4, Thm. 3.2.6]{MR1764202}
(where $Chow^{\mathrm{eff}}(k)$ is the category of effective Chow motives over $k$)
with the projective bundle formula in $DM$ \cite[15.12]{MR2242284} (which shows in
particular that $M(\mathbb P^q)/M(\mathbb P^{q-1})$ is a model for $\sphere (q)[2q]$
\cite[15.2]{MR2242284}).
\end{proof}

\section{Towards the Bloch-Beilinson-Murre filtration}  \label{subsec.BBMfil}

In this section we include some remarks on the still conjectural Bloch-Beilinson-Murre
filtration \cite{MR1265533}, \cite{MR1389964}, \cite{MR1744952}, \cite{MR1744947}.

\subsection{Basic Setup}  \label{subsec.bassetup}
Recall that $k$ is a perfect base field.
Let $SmProj_k$ be the full subcategory of $Sm_{k}$ where the objects are
smooth projective varieties over $k$.  We will write $CH^{\ast}(X)_{\mathbb Q}$ for the Chow
ring of $X$ with rational coefficients: $\oplus_{n=0}^{\dim X} CH^{n}(X)_{\mathbb Q}$.
We fix a Weil cohomology theory $H$ on $SmProj_k$ \cite[1.2]{MR0292838}, with cycle
class map $\mathrm{cl}_{X}:CH^{p}(X)_{\mathbb Q}\rightarrow H^{2p}(X)$.
Given a correspondence $\Lambda \in CH^{q}(X\times Y)_{\mathbb Q}$, we will write
 respectively
$CH_{\Lambda}^{i}:CH^{i+d_{X}-q}(X)_{\mathbb Q}\rightarrow CH^{i}(Y)_{\mathbb Q}$,
$H_{\Lambda}^{i}:H^{i+2(d_{X}-q)}(X)\rightarrow H^{i}(Y)$ for the 
induced maps on the Chow groups and on cohomology \cite[1.3]{MR0292838}, 
where $d_{X}$ is the dimension of $X$.  Namely, 
$CH_{\Lambda}^{i}(\alpha)=\pi _{Y \ast}(\Lambda \cdot 
\pi _{X}^{\ast}\alpha)$, where $\pi _X$, $\pi _Y$ are the relative projections to
$X$ and $Y$; and similarly for $H_\Lambda ^i$.

\begin{defi}  \label{def.BBMfil}
Let $X\in SmProj_k$. 
We will say that a descending filtration $F^{\bullet}$
on $CH^{\ast}(X)_{\mathbb Q}$ is a filtration of Bloch-Beilinson-Murre type if the following
conditions are satisfied:
\begin{enumerate}[{\bf BBM1:}]
\item \label{def.BBMfil.a}
$F^{0}CH^{q}(X)_{\mathbb Q}=CH^{q}(X)_{\mathbb Q}$, for every 
	$0\leq q\leq \dim X$.
\item  \label{def.BBMfil.b}
$F^{1}CH^{q}(X)_{\mathbb Q}=CH^{q}_{num}(X)_{\mathbb Q}$, the group
of cycles numerically equivalent to zero modulo rational equivalence.
\item  \label{def.BBMfil.c}
$F^{\bullet}$ is functorial with respect to Chow correspondences between
smooth projective varieties, i.e. given $\Lambda \in CH^{q'}(X\times Y)_{\mathbb Q}$:
\[ CH_{\Lambda}^{q}(F^{n}CH^{q+d_{X}-q'}(X)_{\mathbb Q})\subseteq F^{n}
	CH^{q}(Y)_{\mathbb Q}.\]
We will write $Gr^{n}_{F}(CH_{\Lambda}^{q})$ for the map induced by $CH_{\Lambda}^{q}$
on the graded groups:
\[Gr^{n}_{F}(CH^{q+d_{X}-q'}(X)_{\mathbb Q})\rightarrow Gr^{n}_{F}(CH^{q}
(Y)_{\mathbb Q}).
\]
\item \label{def.BBMfil.d}
With the notation of {\bf BBM\ref{def.BBMfil.c}} and \ref{subsec.bassetup}.  
If $H_{\Lambda}^{2q-n}=0$ then $Gr^{n}_{F}(CH_{\Lambda}^{q})=0$.
\item \label{def.BBMfil.e}
$F^{q+1}CH^{q}(X)_{\mathbb Q}=0$.	
\end{enumerate}
\end{defi}

\subsubsection{} \label{rmk.relaxhomcond}
Instead of {\bf BBM\ref{def.BBMfil.d}} it is possible to consider the weaker condition
{\bf BBM\ref{def.BBMfil.d}a}: If the class of $\Lambda \in CH^{q'}(X\times Y)_{\mathbb Q}$
is homologous to zero, i.e. $\mathrm{cl}_{X\times Y}(\Lambda)=0\in H^{2q'}(X\times Y)$;
then $Gr^{n}_{F}(CH_{\Lambda}^{q})=0$ for every $n$; $0\leq n\leq q$ (see the graded
condition in \cite[p. 421]{MR2082666}).

\begin{rmk}  \label{rmk.ssconj}
Assuming that homological and numerical equivalence coincide on $X$, i.e.
the standard conjecture $D(X)$ \cite[Prop. 3.6]{MR0292838}; we observe that
{\bf BBM\ref{def.BBMfil.b}} simply says that $F^{1}CH^{q}(X)_{\mathbb Q}$ is given by
the group of cycles homologically equivalent to zero modulo rational
equivalence $CH^{q}_{hom}(X)_{\mathbb Q}$.  This is
the condition that appears in \cite{MR1744952}, \cite{MR1744947}.
Notice that when the base field $k=\mathbb C$ is given by the complex numbers, then
$D(X)$ is known to be true in the following cases:
\begin{enumerate}
\item $CH^{q}_{hom}(X)_{\mathbb Q}=CH^{q}_{num}(X)_{\mathbb Q}$; for
$q=0$, $1$, $2$, $\dim X -1$, $\dim X$ \cite[p. 369 Cor. 1]{MR0230336}.  In fact,
when $q=1$ (resp. $q=\dim X$) and $k$ is algebraically closed,  homological
and numerical equivalence coincide \cite{MR0082730} (resp. 
non-triviality of the cycle map \cite[p.363 1.2C(iii)]{MR0292838}).
\item $X$ is an abelian variety \cite[p. 372 Thm. 4]{MR0230336},
\item $\dim X\leq 4$ \cite[p. 369 Cor. 1]{MR0230336}.
\end{enumerate}
\end{rmk}

Since $CH^{q}(X)_{\mathbb Q}\cong \Hom _{DM}(M(X)(-q)[-2q], \sphere _{\mathbb Q})$,
we can restrict the filtration $F^{\bullet}$ considered in \eqref{thm.filt.motcohY} to the
Chow groups with rational coefficients in order to obtain a filtration that satisfies
several of the properties considered in \eqref{def.BBMfil}:

\begin{thm}  \label{thm.BBMfilcond}
With the notation of \ref{subsec.bassetup}.
Let $X\in SmProj_k$.  Then the filtration $F^\bullet$  on $CH^{\ast}(X)_{\mathbb Q}$
defined in \eqref{thm.filt.motcohY}
satisfies the conditions
{\bf BBM\ref{def.BBMfil.a}}, {\bf BBM\ref{def.BBMfil.b}}, {\bf BBM\ref{def.BBMfil.c}} and
{\bf BBM\ref{def.BBMfil.e}} of \eqref{def.BBMfil}.
\end{thm}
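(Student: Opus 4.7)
The plan is to assemble the statement by simply invoking, for each of the four listed BBM conditions, the corresponding result already proved earlier in the paper for the filtration $F^\bullet$ on motivic cohomology. The identification $CH^q(X)_{\mathbb Q} \cong H^{2q,q}(X,\mathbb Q) = \Hom_{DM}(M(X)(-q)[-2q], \sphere_{\mathbb Q})$ is the bridge: since the filtration in \ref{thm.filt.motcohY} is defined on motivic cohomology groups of arbitrary bidegree, restricting to bidegree $(2q,q)$ with $R = \mathbb Q$ produces the Chow filtration in question. Thus no new construction is needed, and the proof is essentially a checklist.

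First I would dispose of \textbf{BBM\ref{def.BBMfil.a}} and \textbf{BBM\ref{def.BBMfil.e}}: both are immediate consequences of Theorem \ref{thm.filt.motcoh.finite}. Indeed, \ref{thm.filt.motcoh.finite}\eqref{thm.filt.motcoh.finite.a} says $F^0 H^{p,q}(X,R) = H^{p,q}(X,R)$, which specialized to $(p,q) \mapsto (2q,q)$ and $R = \mathbb Q$ gives BBM\ref{def.BBMfil.a}; similarly \ref{thm.filt.motcoh.finite}\eqref{thm.filt.motcoh.finite.b} gives $F^{q+1} H^{2q,q}(X,\mathbb Q) = 0$, which is BBM\ref{def.BBMfil.e}. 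Note that the condition $0 \leq q \leq \dim X$ in BBM\ref{def.BBMfil.a} is automatic (for $q$ outside this range the Chow group vanishes anyway).

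Next, \textbf{BBM\ref{def.BBMfil.c}}: functoriality with respect to Chow correspondences is exactly Remark \ref{rmk.funcfil}, which builds on the tautological functoriality of $F^\bullet$ in $DM$ coming from \ref{thm.filt.motcohY} and the realization functor $Chow^{\mathrm{eff}}(k) \to DM$ of \cite{MR1764202}. Given $\Lambda \in CH^{q'}(X \times Y)_{\mathbb Q}$, its image in $DM$ induces a morphism $M(Y)(-q)[-2q] \to M(X)(-q-d_X+q')[-2(q+d_X-q')]$ (after the appropriate Tate twist and shift), and by the naturality established in \ref{thm.filt.motcohY} and \ref{rmk.funcfil}, pulling back along this map sends $F^n$ into $F^n$. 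This gives the required inclusion $CH^q_\Lambda(F^n CH^{q+d_X-q'}(X)_{\mathbb Q}) \subseteq F^n CH^q(Y)_{\mathbb Q}$.

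Finally, \textbf{BBM\ref{def.BBMfil.b}}, the identification of $F^1$ with cycles numerically equivalent to zero, is the content of Theorem \ref{thm.main2} in the case $R = \mathbb Q$ and $X$ smooth projective. This is the step that carries the actual geometric content, and in some sense is the only non-formal ingredient of the present theorem; everything else is bookkeeping. The only care point is that \ref{thm.main2} is stated for general $R$ (with the extended definition of numerical triviality via base changes to transcendence-finite field extensions), and one should note that for $R = \mathbb Q$ this recovers the classical notion of numerical equivalence, so that the identification of \ref{thm.main2} reads exactly as BBM\ref{def.BBMfil.b} demands. Assembling the four verifications completes the proof.
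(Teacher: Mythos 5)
Your proposal matches the paper's proof exactly: specialize to $R=\mathbb Q$ and bidegree $(2q,q)$, then cite \ref{thm.filt.motcoh.finite} for {\bf BBM\ref{def.BBMfil.a}} and {\bf BBM\ref{def.BBMfil.e}}, \ref{thm.filt.motcohY}--\ref{rmk.funcfil} for {\bf BBM\ref{def.BBMfil.c}}, and \ref{thm.main2} for {\bf BBM\ref{def.BBMfil.b}}. The argument is correct and identical in structure to the paper's.
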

\begin{proof}
In order to obtain the filtration we just take $R=\mathbb Q$ in \eqref{thm.filt.motcohY},
and observe that $CH^{q}(X)_{\mathbb Q}\cong \Hom _{DM}(M(X)(-q)[-2q], 
\sphere _{\mathbb Q})$.

Now, property {\bf BBM\ref{def.BBMfil.a}} follows from 
\ref{thm.filt.motcoh.finite}\eqref{thm.filt.motcoh.finite.a}, property {\bf BBM\ref{def.BBMfil.b}}
follows from \ref{thm.main2}, property 
{\bf BBM\ref{def.BBMfil.c}} follows from \ref{thm.filt.motcohY}-\ref{rmk.funcfil},
and property {\bf BBM\ref{def.BBMfil.e}} follows from
\ref{thm.filt.motcoh.finite}\eqref{thm.filt.motcoh.finite.b}.
\end{proof}

\begin{rmk}  \label{rmk.noQcond}
Notice that the filtration defined in \ref{thm.filt.motcohY} satisfies the conditions required
in \ref{thm.BBMfilcond} for any coefficient ring $R$ and not just the rational numbers.
\end{rmk}

\subsubsection{}  \label{conjmult.homcond}
In general there is not much that can be said at the moment with respect to condition
{\bf BBM\ref{def.BBMfil.d}}.  However, assuming that the tensor structure in $DM$
is compatible with the birational covers $bc_{\leq -n} \sphere _R \rightarrow \sphere _R$
in the sense of \ref{conj.mulprop}, then it is possible to show that 
{\bf BBM\ref{def.BBMfil.d}a} holds \eqref{prop.conj.homo}.

\begin{conj}  \label{conj.mulprop}
Consider the tower \eqref{motbirtower} in $DM$ for $R=\mathbb Q$, and let
$n$, $m\geq 0$.  Then there exists a map $\mu _{n,m}:(bc_{\leq -n} \sphere _{\mathbb Q})
\otimes (bc_{\leq -m} \sphere _{\mathbb Q})\rightarrow bc_{\leq -n-m}\sphere _{\mathbb Q}$,
such that the following diagram commutes:
\[  
\xymatrix{(bc_{\leq -n} \sphere _\mathbb Q)\otimes (bc_{\leq -m} \sphere _\mathbb Q) 
\ar[rr]^-{\theta^{\sphere _{\mathbb Q}}_{-n}\otimes \theta^{\sphere _{\mathbb Q}}_{-m}}
\ar[d]_-{\mu _{n,m}} && 
(\sphere _{\mathbb Q} \otimes \sphere _{\mathbb Q}) \cong \sphere _{\mathbb Q} \\
	bc_{\leq -n-m}\sphere _{\mathbb Q} \ar[urr]_-{\theta^{\sphere _{\mathbb Q}}_{-n-m}}&&}
\]
\end{conj}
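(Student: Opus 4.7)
The plan is to construct $\mu_{n,m}$ by invoking the universal property of the birational cover $\theta^{\sphere_{\mathbb{Q}}}_{-n-m}$ from Proposition \ref{counit-properties} and Remark \ref{rmk.univcar}. Set $T_{n,m}:=(bc_{\leq -n}\sphere_{\mathbb{Q}})\otimes(bc_{\leq -m}\sphere_{\mathbb{Q}})$ and consider the tensor of the two counits
\[
  \alpha_{n,m}\;=\;\theta^{\sphere_{\mathbb{Q}}}_{-n}\otimes\theta^{\sphere_{\mathbb{Q}}}_{-m}\;:\;T_{n,m}\longrightarrow \sphere_{\mathbb{Q}}\otimes\sphere_{\mathbb{Q}}\cong\sphere_{\mathbb{Q}}.
\]
If one knew that $T_{n,m}\in\northogonal{-n-m+1}$, then Proposition \ref{counit-properties} would produce a unique map $\mu_{n,m}:T_{n,m}\to bc_{\leq -n-m}\sphere_{\mathbb{Q}}$ with $\theta^{\sphere_{\mathbb{Q}}}_{-n-m}\circ\mu_{n,m}=\alpha_{n,m}$, which is precisely the commutativity of the conjectured diagram.

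Hence the whole problem reduces to proving the vanishing
\[
  \Hom_{DM}(M(X)(p)[j],T_{n,m})=0 \quad \text{for every } X\in Sm_k,\ j\in\mathbb{Z},\ p\geq -n-m+1,
\]
where I have used Remark \ref{rmk.orthchk} to rephrase $\northogonal{-n-m+1}$. The case $m=0$ is immediate: Proposition \ref{prop.hzgeq0birat} gives $bc_{\leq 0}\sphere_{\mathbb{Q}}\cong\sphere_{\mathbb{Q}}$, so $T_{n,0}\cong bc_{\leq -n}\sphere_{\mathbb{Q}}\in\northogonal{-n+1}=\northogonal{-n-0+1}$ by construction. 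For general $n,m$, I would tensor the distinguished triangle of Theorem \ref{thm.dbdual},
\[
  bc_{\leq -n}\sphere_{\mathbb{Q}}\longrightarrow \sphere_{\mathbb{Q}}\longrightarrow bc_{>-n}\sphere_{\mathbb{Q}}\longrightarrow bc_{\leq -n}\sphere_{\mathbb{Q}}[1],
\]
with $bc_{\leq -m}\sphere_{\mathbb{Q}}$ and analyze the resulting long exact sequence of Hom-groups. Since $bc_{\leq -m}\sphere_{\mathbb{Q}}\in\northogonal{-m+1}\supseteq\northogonal{-n-m+1}$, the ``middle'' term vanishes, so the problem shifts to controlling $(bc_{>-n}\sphere_{\mathbb{Q}})\otimes(bc_{\leq -m}\sphere_{\mathbb{Q}})$ using the fact that $bc_{>-n}\sphere_{\mathbb{Q}}\in(\northogonal{-n+1})^{\perp}$ by \ref{thm.dbdual}\eqref{thm.dbdual.b}.

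The main obstacle is precisely this last step: there is no automatic K\"unneth-type statement showing that tensoring with $bc_{\leq -m}\sphere_{\mathbb{Q}}$ carries $(\northogonal{-n+1})^{\perp}$ into something orthogonal to $\generators^{\mathrm{eff}}(-n-m+1)$. The birational truncation is \emph{coarser} than the slice truncation, for which multiplicativity has been established by operadic/model-categorical methods in \cite{Pelaez:2014}; those methods do not transfer mechanically here. Over $\mathbb{Q}$, one avenue is to exploit the expected splitting of $DM_{\mathbb{Q}}$ into motivic Eilenberg--MacLane summands and reduce the required orthogonality to the vanishing of negative-weight motivic cohomology (\cite[4.2]{MR2242284}). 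A more promising alternative is to lift the construction of $bc_{\leq q}$ to the level of commutative monoids in a suitable model for $DM$, making the localization monoidal, so that each $bc_{\leq -n}\sphere_{\mathbb{Q}}$ acquires a commutative monoid structure compatible with the tower; in that setting the multiplications $\mu_{n,m}$ and the commutativity of the diagram would arise automatically from the monoidal structure. Settling this point seems to me the essential difficulty behind Conjecture \ref{conj.mulprop}.
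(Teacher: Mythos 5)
The statement you were asked to prove is stated in the paper as a \emph{Conjecture}, and the paper gives no proof of it: it is invoked only as a hypothesis in \ref{prop.conj.homo} to deduce the weakened condition {\bf BBM\ref{def.BBMfil.d}a}. So there is no argument in the paper to compare yours against, and your proposal --- which you honestly present as incomplete --- does not close the gap either. That said, your reduction is exactly the right formulation of what the conjecture amounts to: since $bc_{\leq -n-m}\sphere_{\mathbb Q}$ is characterized by the universal property of \ref{counit-properties}--\ref{rmk.univcar}, producing $\mu_{n,m}$ with the required commutativity (uniquely) is equivalent to showing $(bc_{\leq -n}\sphere_{\mathbb Q})\otimes(bc_{\leq -m}\sphere_{\mathbb Q})\in\northogonal{-n-m+1}$, and your verification of the case $m=0$ via \ref{prop.hzgeq0birat} is correct. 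You also correctly locate the open content: a K\"unneth-type compatibility of the birational tower with the tensor structure, which does not follow formally because, while the generating sets $\generators^{\mathrm{eff}}(p)$ behave well under twisting, the orthogonal categories $DM^{\perp}(p)$ have no obvious stability under tensoring with non-invertible objects.

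One small inaccuracy in your exploratory paragraph: after tensoring the triangle $bc_{\leq -n}\sphere_{\mathbb Q}\to\sphere_{\mathbb Q}\to bc_{>-n}\sphere_{\mathbb Q}\to{}$ with $bc_{\leq -m}\sphere_{\mathbb Q}$, the middle term $\Hom_{DM}(M(X)(p)[j],bc_{\leq -m}\sphere_{\mathbb Q})$ vanishes only for $p\geq -m+1$, not in the whole required range $p\geq -n-m+1$ (the inclusion $\northogonal{-n-m+1}\subseteq\northogonal{-m+1}$ goes the wrong way for your purpose). So even granting control of $(bc_{>-n}\sphere_{\mathbb Q})\otimes(bc_{\leq -m}\sphere_{\mathbb Q})$, the long exact sequence would not immediately give the orthogonality in the critical weights $-n-m+1\leq p<-m+1$. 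Since you flag the whole step as the essential difficulty, this does not change the verdict: your proposal is a correct diagnosis of an open problem, not a proof, and the statement remains conjectural in the paper as well.
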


\begin{prop}  \label{prop.conj.homo}
Let $X\in SmProj_k$.
Assume that the maps $\mu_{n,1}$ in \ref{conj.mulprop} exist for $n\geq 0$.  
Then the filtration $F^\bullet$  on $CH^{\ast}(X)_{\mathbb Q}$
considered in \eqref{thm.filt.motcohY} and \eqref{thm.BBMfilcond}
satisfies the remaining condition {\bf BBM\ref{def.BBMfil.d}a} \eqref{rmk.relaxhomcond}.
\end{prop}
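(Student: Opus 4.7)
The plan is to establish the stronger containment $CH_{\Lambda}^{q}(F^{n}CH^{q+d_{X}-q'}(X)_{\mathbb Q})\subseteq F^{n+1}CH^{q}(Y)_{\mathbb Q}$ for every $n$; $0\leq n\leq q$, which immediately forces $Gr^{n}_{F}(CH_{\Lambda}^{q})=0$ and hence condition {\bf BBM\ref{def.BBMfil.d}a}.

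First I would exploit the implication \emph{homological} $\Rightarrow$ \emph{numerical} equivalence: the hypothesis $\mathrm{cl}_{X\times Y}(\Lambda)=0$ together with \ref{thm.main2} applied to the smooth projective variety $X\times Y$ forces $\Lambda \in F^{1}CH^{q'}(X\times Y)_{\mathbb Q}$, so that the classifying map $\tilde{\Lambda}:M(X\times Y)(-q')[-2q']\to \sphere_{\mathbb Q}$ admits a lift $\tilde{\Lambda}':M(X\times Y)(-q')[-2q']\to bc_{\leq -1}\sphere_{\mathbb Q}$ along $\theta^{\sphere_{\mathbb Q}}_{-1}$. Likewise, for $\alpha \in F^{n}CH^{a}(X)_{\mathbb Q}$ with $a=q+d_{X}-q'$, the map $\tilde{\alpha}:M(X)(-a)[-2a]\to \sphere_{\mathbb Q}$ admits a lift $\tilde{\alpha}':M(X)(-a)[-2a]\to bc_{\leq -n}\sphere_{\mathbb Q}$ along $\theta^{\sphere_{\mathbb Q}}_{-n}$.

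Next I would reinterpret the intersection product $\Lambda \cdot p_{X}^{\ast}\alpha \in CH^{a+q'}(X\times Y)_{\mathbb Q}$ motivically. Setting $Z=X\times Y$, the formula $\Lambda \cdot p_{X}^{\ast}\alpha = \Delta_{Z}^{\ast}(\Lambda \times p_{X}^{\ast}\alpha)$ translates into the assertion that its classifying map is the composite
\[
M(Z)(-a-q')[-2a-2q']\xrightarrow{\Delta_{Z}} M(Z)(-q')[-2q']\otimes M(Z)(-a)[-2a] \xrightarrow{\tilde{\Lambda}\otimes \widetilde{p_{X}^{\ast}\alpha}} \sphere_{\mathbb Q}\otimes \sphere_{\mathbb Q}\cong \sphere_{\mathbb Q},
\]
where $\widetilde{p_{X}^{\ast}\alpha}=\tilde{\alpha}\circ (p_{X}(-a)[-2a])$ uses the projection $p_{X}:M(Z)\to M(X)$. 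Substituting the lifts, the middle map factors as $(\theta^{\sphere_{\mathbb Q}}_{-1}\otimes \theta^{\sphere_{\mathbb Q}}_{-n})\circ (\tilde{\Lambda}'\otimes (\tilde{\alpha}'\circ p_{X}(-a)[-2a]))$. The commutative square in \ref{conj.mulprop}, invoked with $\mu_{n,1}$ after using symmetry of $\otimes$, shows that the terminal portion of this composite agrees with $\theta^{\sphere_{\mathbb Q}}_{-n-1}\circ \mu_{n,1}$. Hence the entire classifying map of $\Lambda \cdot p_{X}^{\ast}\alpha$ factors through $bc_{\leq -n-1}\sphere_{\mathbb Q}$, i.e.\ $\Lambda \cdot p_{X}^{\ast}\alpha \in F^{n+1}CH^{a+q'}(X\times Y)_{\mathbb Q}$.

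To conclude I would observe that the pushforward $p_{Y,\ast}:CH^{a+q'}(X\times Y)_{\mathbb Q}\to CH^{q}(Y)_{\mathbb Q}$ is the action of a Chow correspondence (the graph of $p_{Y}$), so it preserves $F^{\bullet}$ by condition {\bf BBM\ref{def.BBMfil.c}} established in \ref{thm.BBMfilcond}; therefore $CH_{\Lambda}^{q}(\alpha)=p_{Y,\ast}(\Lambda \cdot p_{X}^{\ast}\alpha)\in F^{n+1}CH^{q}(Y)_{\mathbb Q}$, as required. The main obstacle I anticipate is the second step: showing rigorously that $\Lambda \cdot p_{X}^{\ast}\alpha$ is indeed classified by the displayed composite in $DM$, and carefully tracking the Tate-twist identifications of \ref{prop.Tatetwist} so that the hypothesis \ref{conj.mulprop} produces the desired factorization through $bc_{\leq -n-1}\sphere_{\mathbb Q}$.
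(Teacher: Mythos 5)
Your proposal is correct and follows essentially the same route as the paper: lift $\Lambda$ through $bc_{\leq -1}\sphere_{\mathbb Q}$ via \ref{thm.main2} (homological $\Rightarrow$ numerical equivalence), lift $\alpha$ through $bc_{\leq -n}\sphere_{\mathbb Q}$ by definition of $F^{n}$, and apply $\mu_{n,1}$ to factor the product through $bc_{\leq -n-1}\sphere_{\mathbb Q}$, yielding the stronger containment into $F^{n+1}$. The only difference is that you make explicit the reduction the paper leaves implicit — identifying $\Lambda\cdot p_{X}^{\ast}\alpha$ with the composite through the diagonal and then pushing forward along $p_{Y}$ using {\bf BBM3} — which is a welcome clarification rather than a divergence.
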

\begin{proof}
Let $Y \in SmProj_k$, and $\Lambda \in CH^{q'}(X\times Y)_{\mathbb Q}$ a
correspondence homologous to zero, in particular numerically equivalent
to zero by \cite[Prop. 1.2.3]{MR0292838}.  Consider $\Lambda$ as a map
$M(X\times Y)\rightarrow \sphere _{\mathbb Q}(q')[2q']$.  By \ref{thm.main2},
we conclude that there exists a map
$\Lambda ':M(X\times Y)(-q')[-2q']\rightarrow bc_{\leq -1}\sphere _{\mathbb Q}$ in
$DM$ such that $\Lambda (-q')[-2q']=\theta ^{\sphere _{\mathbb Q}}_{-1}\circ \Lambda '$
\eqref{motbirtower}.

Now let $\alpha \in F^{n}CH^{q}(X)_{\mathbb Q}$, which we will consider as a map
$\alpha:M(X)\rightarrow \sphere _{\mathbb Q}(q)[2q]$.  Hence, by construction
\eqref{thm.filt.motcohY} there exists a map $\alpha ':M(X)(-q)[-2q]\rightarrow bc_{\leq -n}
\sphere _{\mathbb Q}$, such that $\alpha (-q)[-2q] 
=\theta ^{\sphere _{\mathbb Q}}_{-n}\circ \alpha '$
\eqref{motbirtower}.

Thus, it
suffices to show that the following composition factors through
$\theta^{\sphere _{\mathbb Q}}_{-n-1}:bc_{\leq -n-1}\sphere _{\mathbb Q}\rightarrow
\sphere _{\mathbb Q}$ (see \ref{thm.filt.motcohY}):
\[\xymatrix@=1.1pc{(M(X)(-q)[-2q])\otimes(M(X)\otimes M(Y))(-q')[-2q'] \ar[d]_-{\alpha'
\otimes \Lambda '}& \\
(bc_{\leq -n} \sphere _{\mathbb Q})\otimes (bc_{\leq -1} \sphere _{\mathbb Q}) \ar[r]^-{\theta
^{\sphere _{\mathbb Q}}_{-n}\otimes \theta ^{\sphere _{\mathbb Q}}_{-1}}&
(\sphere _{\mathbb Q} \otimes \sphere _{\mathbb Q})\cong \sphere _{\mathbb Q}}
\]
But this follows directly from the existence of the map $\mu _{n,1}$ \eqref{conj.mulprop}.
\end{proof}

\subsubsection{}  
The anonymous referee suggests the following heuristic argument to explain why
the filtration considered in \ref{thm.BBMfilcond} is a reasonable candidate for a 
Bloch-Beilinson-Murre filtration:  Let $X\in SmProj_k$ be of pure dimension $d$.
Combining \ref{prop.Tatetwist} and \ref{prop.crit1} we conclude that 
$\alpha \in CH^{q}(X)_{\mathbb Q}$ is in $F^{n}CH^{q}(X)_{\mathbb Q}$ 
if and only if the composition:
\[  \xymatrix{f_{q-n+1}(M(X)) \ar[r]^-{\epsilon _{q-n+1}^{M(X)}}& M(X) 
\ar[r]^-{\alpha} & \sphere _R (q)[2q]}
\]
is zero in $\DMeff$.  Let $r=q-n+1$, by \cite[Prop. 1.1]{MR2249535}, 
\cite[Lem. 5.9]{MR2600283} we conclude that $f_{r}(M(X))\cong \inthomeff
(\sphere _{\mathbb Q}(r)[2r], M(X))\otimes \sphere _{\mathbb Q}(r)[2r]$ in $\DMeff$. 
Thus, it follows from Voevodsky's cancellation theorem \cite{MR2804268} that
$\alpha \in F^nCH^q(X)_{\mathbb Q}$ if and only if the map
$\inthomeff(\sphere _{\mathbb Q}(r)[2r], M(X))\rightarrow \sphere_{\mathbb Q}(n-1)[2n-2]$
in $\DMeff$ induced by $\alpha$ is zero.

Assuming a conjecture of Ayoub \cite[4.22]{ayoub:preprint},  the functor
$\DMeff \rightarrow \DMeff$; $A\mapsto \inthomeff(\sphere _{\mathbb Q}(1),A)$ 
maps $\DMeff _{\leq n}$
to $\DMeff _{\leq n-1}$, where $\DMeff _{\leq n}$ is the smallest full triangulated
subcategory of $\DMeff$ which is closed under arbitrary (infinite) coproducts and contains
$M(Y)$ for $Y\in Sm_k$, $\dim Y\leq n$.  Thus, we conclude that 
$\inthomeff(\sphere _{\mathbb Q}(r)[2r], M(X))$ is in 
$\DMeff _{\leq d-q+n-1}$.

Hence, using the argument in \ref{thm.main2} we conclude that $F^nCH^q(X)_{\mathbb Q}$
is contained in the kernel of the maps $CH^{n-1}_{\Lambda}:CH^q(X)_{\mathbb Q}
\rightarrow CH^{n-1}(Y)_{\mathbb Q}$; $\alpha \mapsto \pi _{Y \ast}(\Lambda \cdot 
\pi _{X}^{\ast}\alpha)$ (see \ref{subsec.bassetup}), where $\Lambda \in CH^{d-q+n-1}
(Y\times X)
_{\mathbb Q}$ and $\dim Y\leq d-q+n-1$.  This implies that
$F^nCH^q(X)_{\mathbb Q}$ is contained in the filtration defined in \cite[\S 1.1]{MR2082666}
which for zero cycles is a natural candidate for a Bloch-Beilinson-Murre filtration
\cite[Prop. 6]{MR2082666}.

For more details on Ayoub's conjecture \cite[4.22]{ayoub:preprint}, we refer the reader
to the recent preprint \cite{bondarko:prep}.


\section*{Acknowledgements}
The author would like to thank warmly Chuck Weibel for his interest in this work, as well as
for all the advice and support during all these years.  The author also thanks the anonymous
referee for bringing to our attention the work of Kahn-Sujatha 
\cite{Kahn:2015kq} on unramified cohomology and their computation
of the first component of the filtration studied in this paper.  
The author would also like to thank Bruno Kahn,
Mikhail Bondarko and Jinhyun Park for several conversations on the 
Bloch-Beilinson-Murre filtration.


\bibliography{biblio_locSH}
\bibliographystyle{abbrv}

\end{document}